\newtheorem{thm}{Theorem}[section]
\newtheorem{lemma}[thm]{Lemma}
\newtheorem{prop}[thm]{Proposition}
\newtheorem{cor}[thm]{Corollary}
\newtheorem{example}[thm]{Example}
\theoremstyle{definition}
\newtheorem{defn}[thm]{Definition} %[section]
\theoremstyle{remark}
\newtheorem{open}{Open Question}
\newtheorem*{xrem}{Remark}
\begin{document}

%------
% Insert the title of your paper and (if necessary)
% a short title for the running head.
%-----

%\begin{Frontmatter}
\title[Square Functions on $L^1$]{Square Functions and Variational Estimates for Ritt Operators on $L^1$}
 
\author[J. Hults]{Jennifer Hults}
\address{
Department of Mathematics and Statistics\\
University at Albany, SUNY, Albany, NY 12222 \\} % Your institution
\email{jhults@albany.edu} % Your email address

\author{Karin Reinhold-Larsson } 
\address{
Department of Mathematics and Statistics\\
University at Albany, SUNY, Albany, NY 12222 } % Your institution
\email{reinhold@albany.edu} % Your email address

\date{}

%------
% Add MSC 2020 codes according to https://zbmath.org/classification/.
% A unique primary MSC code (in curly brackets) is mandatory,
% while secondary MSC codes (in square brackets) are optional.
% 	47A35.  47Axx 28Dxx, 37Axx.  58J51 , 37A30
% 60A10, 47Bxx, 	47B12 , 	47B48.    28A80.   
% meas the ergodic 28Dxx
% mis measure th.  28D05
%------

\keywords{Ritt operators, Square functions, Positive contractions, Convolution measures, Convolution operators, Variational estimates, Weak type (1,1)}

%----------------------------------------------------------------------------------------

\begin{abstract}
A power bounded operator $T$ satisfying  $\sup_n n\lVert T^n-T^{n+1}\rVert<\infty$  is a Ritt operator. For such operators, we study the generalized square function
\[Q_{\alpha,s,r}^Tf=\Big( \sum_n n^{\alpha} |T^n(I-T)^rf|^s \Big)^{1/s}.\]
It is known that when $T$ is a positive contraction and a Ritt operator on $L^p$, $1<p<\infty$, then for  any integer $r\ge 1$, the square function $Q_{2r-1,2,r}^Tf$
% \[\Big( \sum_n n^{2m-1} |T^n(I-T)^{m}f|^2 \Big)^{1/2}\] 
 defines a bounded operator \cite{LeMX-Vq} on $L^p$. 
  In this work, we extend the theory to the endpoint case $p=1$. We show that if $T$ is a Ritt operator on $L^1$,
 then the generalized square function
$Q_{\alpha,s,r}^Tf $ %=\Big( \sum_n n^{\alpha} |T^n(I-T)^mf|^s \Big)^{1/s}\] 
is bounded on $L^1$ whenever $\alpha+1<sr$. 
In the particular setting where $T$ is a convolution operator of the form
$T_{\mu}=\sum_k \mu(k) U^kf$, with $\mu$ a probability measure on $\mathbb Z$ and  $U$ the composition operator induced by an invertible measure preserving transformation, we provide sufficient conditions on $\mu$ under which  $Q_{2r-1,2,r}^{T_{\mu}}f$ is of weak type (1,1), for $r>0$. 
We also establish bounds for variational  and oscillation norms, $\lVert n^{\beta} T^n(1-T)^r\rVert_{v(s)}$ and $\lVert n^{\beta} T^n(1-T)^r\rVert_{o(s)}$, for Ritt operators, highlighting endpoint behavior.
\end{abstract}

%\end{Frontmatter}
\maketitle

%------
% INSERT THE BODY OF THE PAPER HERE (except
% acknowledgments, funding info and bibliography)
%------

\section{Introduction}

Ritt operators have a long history, tracing back to the foundational work of Ritt \cite{Ritt} and later developed by Lyubich \cite{Lyub}, Nevanlinna and Zemanek \cite{Nev,NZ}, and others \cite{Bl}.
In recent years, they have been the focus of renewed interest \cite{A,ALeM,Bl,CCL,Dunn,GT,LeM-H,LeMX-max,LeMX-Vq,HH} due to the development of their $H^{\infty}$--functional calculus which made possible estimates for square functions in $L^p$ for $1<p<\infty$. 
Square functions and variational inequalities have been fruitful in martingale theory and harmonic analysis, with applications to Littlewood--Paley theory and in ergodic theory.  They were instrumental in bounding associated maximal functions, establishing convergence of sequences of operators and controlling their rate of convergence.

%Throughout these notes, $Y$ denotes a Banach space of $\mathbb C$--valued functions. 
\begin{defn} %1.1 (old 1.3)
Let $T$ be a linear operator on a Banach space such that it is (doubly) power--bounded: if supp$(\nu)\subset \mathbb N_0$, assume that $T$ is power--bounded, that is, $\sup_{n\ge 0} \lVert T^n\rVert<\infty$, otherwise assume  $T$ is invertible and doubly--power--bounded: $\sup_{n\in \mathbb Z} \lVert T^n\rVert<\infty$.
Let $\nu$ be a finite signed measure on $\mathbb Z$. Define the  operator induced by $\nu$ as
 \begin{equation}T_{\nu}f=\sum_k \nu(k) T^kf.\label{eq:Tmu} \end{equation}
\end{defn}

Given $0<a<1$, the series expansion of
$(1-x)^{a}=1-\sum_{k\ge 1} g(a,k) x^k$, $|x|\le 1$,
yields a probability measure $\nu_{a}$ on $\mathbb N$ with $\nu_{a}(1)=a$ and 
 \begin{equation}\nu_{a}(k)= g(a,k) =\frac{a |a-1| \ldots |a-k+1|}{k!} \ge 0, k>1. \label{eqn:valpha} \end{equation}

\begin{defn} %1.2 (old 1.4)
Let $T$ be a power--bounded linear operator on a Banach space. For $0<a<1$ define $(I-T)^{a}$ 
as \[(I-T)^{a} = I - T_{\nu_{a}}= I - \sum_{k\ge 1} g(a,k) T^k.\]
\end{defn}

The operators $T_{\nu_{a}}$ have been considered by many authors. The first systematic study of their  properties  is due to Derrienic and Lin \cite{DerLin}.  

Decomposing any real $r>0$ into its integer and fractional components allows us to extend the definition of $(I-T)^r$ for all real $r>0$.

In particular, if $\mu$ is a finite measure on $\mathbb Z$,
$T_{\mu}^n(I-T_{\mu})^r f = T_{\nu_{n,r}} f$
where $\nu_{n,r}$ is a signed measure on $\mathbb Z$ satisfying $\hat \nu_{n,r}(t) = \hat \mu^n(t) (1-\hat\mu(t))^r$.

\begin{defn} %1.3 . was 1.5
For real numbers $s\ge 1,r\ge 0$, define generalized square functions as follows.
\[{\bf Q_{\alpha,s,r}}f ={\bf Q^{T}_{\alpha,s,r}}f = \Big(\sum_{n=1}^{\infty} n^{\alpha} |T^n(I-T)^r f|^{s} \Big)^{1/s}.\]
\end{defn}

\begin{defn} %1.4 was 1.6
Let $T$ be a linear operator on a Banach space. $T$ is a Ritt operator if $T$ is power bounded and $\sup_n n \lVert T^n(1-T)\rVert<\infty$.
\end{defn}

For any sequence of complex number $\{x_n\}$, the s--variational norm is defined as
\[\lVert \{x_n\} \rVert_{v(s)} = \sup \Big(\sum_k |x_{n_k} - x_{n_{k+1}}|^s \Big)^{1/s}\]
where the sup is taken over all possible increasing sequences $\{n_k\}$.

Given any fixed sequence of increasing integers $\{n_k\}$, the s--oscillation norm is defined as
\[\lVert \{x_n\} \rVert_{o(s)} =\Big(\sum_k \max_{n_k\le n,m\le n_{k+1}} |x_n-x_m|^s \Big)^{1/s}.\]
Neither of these are proper norms, but they are seminorms. We refer the reader to \cite{JKRW,JSW} for discussion of their properties.
%1.1

 The works of 
Le Merdy and Xu \cite{LeMX-Vq}, Arhancet and LeMerdy \cite{ALeM}, and Cuny, Cohen and Lin \cite{CCL}, established the following results for Ritt operators on $L^p$, $1<p<\infty$. 

%1.5  was 1.1
\begin{thm} \label{thm:LeM} Let $(X,\mathcal B,m)$ be a $\sigma$--finite measure space, $1<p<\infty$, and $T$ a positive contraction of $L^p(X)$. 
If $\sup_n n \lVert T^n-T^{n+1} \rVert<\infty$, then, for any fixed real number $r> 0$, 
\begin{enumerate}
\item    ${\bf Q_{2r-1,s,r}}f$               %$\Big( \sum_n n^{2r-1} |T^n(I-T)^{r}f|^2 \Big)^{1/2}$
 is bounded on $L^p$ for $s\ge 2$;
%\item $\lVert \{T^nf\} \rVert_{v(s)}$, and more generally $\lVert \{n^r T^n(I-T)^r f\} \rVert_{v(s)}$, 
%\item for any increasing sequence $\{n_k\}$, $\lVert \{T^nf\} \rVert_{o(2)}$, and more generally $\lVert \{n^r T^n(I-T)^r f\} \rVert_{o(2)}$, 
\item for $s>2$, $\lVert \{T^nf\} \rVert_{v(s)}$ and, for any increasing sequence $\{n_k\}$, $\lVert \{T^nf\} \rVert_{o(2)}$, are bounded on $L^p$; and
\item $\lVert \{n^r T^n(I-T)^r f\} \rVert_{v(s)}$  is bounded on $L^p$ for $s\ge 2$.
\item Let $\{n_k\}\subset \mathbb N$ be an increasing sequence with $n_{k+1}-n_k \sim n_k$, then\\ $\left( \sum_k \max_{n_k\le n\le n_{k+1}} n^{2r}|T^n(I-T)^r|^2 \right)^{1/2}$ is bounded on $L^p$.
\end{enumerate}
\end{thm} 

Note: 
Le Merdy and Xu \cite{LeMX-Vq} (Proposition 4.1) proved part a) for integer $r$ and $s=2$ but since $(\sum_k |a_k|^s)^{1/s} \le (\sum_k |a_k|^2)^{1/2}$ if $s\ge 2$, a) also holds for $s>2$.
%Let $S_{T,r}f=\Big( \sum_n n^{2r-1} |T^n(I-T)^{r}f|^2 \Big)^{1/2}$. 
Arhancet and Le Merdy \cite{ALeM} subsequently showed that any pair of norms of the form $\lVert {\bf Q_{2r-1,2,r}}f\rVert_p$ (with ($r>0$) are equivalent in $L^p$, $1<p<\infty$. 
Hence a) holds for any $r>0$.
Additionally, Le Merdy and Xu \cite{LeMX-Vq} (Theorems 4.4 \& 5.6) proved b) and that,
 for $r\ge 1$ integer, $\lVert \{n^r T^n(I-T)^r f\} \rVert_{v(s)}$, for $s>2$; and $\lVert \{n^r T^n(I-T)^r f\} \rVert_{o(2)}$ are bounded on $L^p$.
Cuny, Cohen and Lin \cite{CCL} (Theorem 6.5) strengthened these results by establishing $\lVert \{n^r T^n(I-T)^r f\} \rVert_{v(2)}$ is  bounded on $L^p$, $r>0$. Their theorem focused on $0<r<1$ but their methods apply to any $r>0$. They also showed item d) for $n_k=2^k$ but their proof applies to sequences with $n_{k+1}-n_k \sim n_k$ with minor modifications.

In martingale theory, the equivalence between the square function and associate maximal functions is a classical result. 
This naturally raises the question of whether such an equivalence holds in other contexts. 
Cohen, Cuny and Lin  \cite{CCL}  established that a similar equivalence holds in the context of Ritt operators.

%1.6  was 1.2
\begin{thm}\label{thm:CCL}
Let $(X,\mathcal B,m)$ be a $\sigma$-finite measure space, $1 <p <\infty$ and $T$ a positive contraction on $L^p(X, m)$. Then the following are equivalent:
\begin{enumerate}
\item  $\sup_n n\lVert T^n-T^{n+1}\rVert <\infty$,
\item there exists a constant $C_p>0$ such that $\lVert {\bf Q_{1,2,1}}f \rVert_p \le C_p \lVert f\rVert_p$.
%$\lVert (\sum_n n |T^nf-T^{n+1}f|^2)^{1/2}\rVert \le C_p \lVert f\rVert_p$.
\end{enumerate}
\end{thm}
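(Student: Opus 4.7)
The forward direction $(1) \Rightarrow (2)$ is immediate: it is exactly the $r = 0$ case of Theorem~\ref{thm:LeM}(1), which provides the square function bound in $L^p$ directly from the Ritt hypothesis.

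For the reverse direction $(2) \Rightarrow (1)$, I would begin with the pointwise comparison
\[\sqrt{n}\,|T^n(I-T)f| \le \Bigl(\sum_k k\,|T^k(I-T)f|^2\Bigr)^{1/2},\]
which after taking $L^p$ norms gives only $\sqrt{n}\,\|T^n - T^{n+1}\|_{L^p \to L^p} \le C_p$, a factor of $\sqrt{n}$ short of the Ritt condition. To close this gap, I would combine the telescoping identity
\[T^n(I-T) = \sum_{k \ge n} T^k(I-T)^2,\]
whose partial sums converge in $L^p$ because the previous bound forces $\|T^N(I-T)f\|_p \to 0$, with a weighted Cauchy--Schwarz estimate in the summation index:
\[|T^n(I-T)f| \;\le\; \Bigl(\sum_{k\ge n} k^{-3}\Bigr)^{1/2}\Bigl(\sum_k k^3\,|T^k(I-T)^2 f|^2\Bigr)^{1/2} \;\lesssim\; \frac{1}{n}\Bigl(\sum_k k^3\,|T^k(I-T)^2 f|^2\Bigr)^{1/2}.\]
Taking $L^p$ norms, this yields $n\,\|T^n - T^{n+1}\|_{L^p \to L^p} \le C$, provided one can establish the weight-$k^3$ square function bound for $T^k(I-T)^2$.

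The main obstacle is precisely this last square function estimate. Applying hypothesis $(2)$ directly to $g = (I-T)f$ yields only the weight-$k^1$ bound
\[\Bigl\|\Bigl(\sum_k k\,|T^k(I-T)^2 f|^2\Bigr)^{1/2}\Bigr\|_p \le 2C_p\|f\|_p,\]
so two additional powers of $k$ inside the $\ell^2$ norm must be recovered. I would attempt this bootstrap using the factorization $T^k(I-T)^2 = (T^{k/2}(I-T))^2$ combined with the positivity of $T$ --- for instance, via Akcoglu's positive isometric dilation to promote norm bounds into pointwise square function bounds, or by running a parallel argument on $L^{p'}$ for the adjoint $T^*$ (also a positive contraction) and interpolating. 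An alternative route is to first extract the dyadic bound $\|T^n - T^{2n}\|_p \le C'$ (from Cauchy--Schwarz applied to $T^n - T^{2n} = \sum_{k=n}^{2n-1}T^k(I-T)f$) and then recover the full Ritt condition by inverting this via the mean ergodic decomposition $L^p = \mathrm{Fix}(T) \oplus \overline{(I-T)L^p}$, which is valid for positive contractions on reflexive $L^p$. I expect this bootstrap from a weight-$k$ to a weight-$k^3$ square function bound to be the genuinely difficult step, since purely pointwise comparisons cannot exploit the extra cancellation that positivity of $T$ provides.
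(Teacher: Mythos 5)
The paper does not prove Theorem~\ref{thm:CCL}; it is cited verbatim from Cohen, Cuny and Lin \cite{CCL}. So there is no in-paper argument to compare against, and I am assessing your proposal on its own.

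Your forward direction $(1)\Rightarrow(2)$ is fine: it is exactly the $r=0$ case of Theorem~\ref{thm:LeM}(1), as you say.

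The reverse direction $(2)\Rightarrow(1)$ is where the proposal breaks. Your reduction is clean and correct as far as it goes: $\sqrt n\,\|T^n(I-T)\|\le C_p$ follows pointwise from (2), the telescoping $T^n(I-T)=\sum_{k\ge n}T^k(I-T)^2$ is then legitimate, and Cauchy--Schwarz against the weight $k^{-3}$ reduces the Ritt bound to the weight-$k^3$ square function estimate
\[
\Bigl\|\Bigl(\sum_k k^3\,|T^k(I-T)^2 f|^2\Bigr)^{1/2}\Bigr\|_p \lesssim \|f\|_p.
\]
But that estimate is never established, and you acknowledge as much. Applying (2) to $g=(I-T)f$ only gives the weight-$k^1$ version, and you have correctly identified that the two missing powers of $k$ are the crux. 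The three avenues you propose are all left as sketches, and the third one does not work as stated: the dyadic bound $\|T^n-T^{2n}\|\le C$ together with power-boundedness does \emph{not} imply the Ritt condition. A scalar counterexample is $T=e^{i\theta}$ with $\theta$ irrational: this is a contraction, $|T^n-T^{2n}|=|1-e^{in\theta}|\le 2$ is uniformly bounded, yet $\sigma(T)=\{e^{i\theta}\}\ne\{1\}$, so $T$ is not Ritt. (One must of course check that the scalar example lifts to a positive contraction on $L^p$; the point is that $\|T^n-T^{2n}\|\le C$ plus power-boundedness is not, by itself, an equivalent form of the Ritt condition, so the proposed mean-ergodic inversion cannot close the argument.) The other two suggestions --- Akcoglu's dilation or a duality/interpolation argument on $L^{p'}$ --- are plausible directions, but neither is carried out, and it is not clear that either delivers the $k^3$ weight from (2) alone without already assuming (1). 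As written, the reverse implication is incomplete; you would need to either prove the weight-$k^3$ square function bound directly from (2) using the positivity of $T$, or switch to a genuinely different strategy (for instance the $R$-boundedness/$H^\infty$-calculus route that underlies the proof in \cite{CCL}).
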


Let $X=(X,\beta,m)$ be a $\sigma$--finite measure space. 
Throughout these notes, $Y$ denotes a Banach function space, that is, a Banach space of measurable complex--valued functions on $X$. In particular, this class includes $L^1(X)$ and Orlicz spaces.
%
% 1.7
\begin{thm} \label{thm:RittResult} Let 
$T$ be a Ritt operator on a Banach function space $Y$.  
Let $s\ge 1$ and $r> 0$, the following hold:
\begin{enumerate}
% a
\item If $sr>\alpha+1$, then ${\bf Q^T_{\alpha,s,r}}f$ 
is bounded on $Y$; % $L^1$;
% b
\item If $\beta<r$ and $\{n_k\}\subset \mathbb N$ an increasing sequence with $n_{k+1}-n_k\sim n_k$, then  
$\sup_n n^{\beta} |T^n (I- T)^{r} f|$ and
$\left(\sum_k  \sup_{n_k< n<n_{k+1}} n^{\beta s} |T^n (I- T)^{r} f|^s\right)^{1/s}$
are bounded on $Y$, % $L^1$;
 $\lim_{n\to\infty} n^{r} \lVert T^n (I- T)^{r} f\rVert=0$ and $\lim_{n\to \infty} n^{\beta} |T^n (I- T)^{r}f | = 0$ a.e.;
% c
\item  If $\beta<r$ and $s\ge 1$ 
then both  $\lVert n^{\beta} T^n(I-T)^r f\rVert_{v(s)}$ and  $\lVert n^{\beta} T^n(I-T)^r f\rVert_{o(s)}$ are bounded on  $Y$;  %$L^1$;
% d
\item Let $\{n_k\}$ be any increasing sequence with $ n_{k+1} -n_k \sim n_k^{\gamma}$, for some $\gamma \in(0,1]$. 
If  $\beta< r+(1-\gamma)(1-1/s)$ %
then  %the oscillation \\ 
$\Big( \sum_k n_k^{\beta s}  \max_{ n_k\le n,m\le n_{k+1} } |(T^n -T^{m})(I-T)^r  f|^s \Big)^{1/s} $ and
%jump square functions
$\Big( \sum_k n_k^{\beta s} |(T^{n_k}-T^{n_{k+1}}) (I-T)^r  f|^s \Big)^{1/s}$,
are bounded on  $Y$. In particular, these results holds for $\beta< r$ and $s\ge 1$ or $\beta\le r>0$ and $s> 1>\gamma$.  % $L^1$. 
% e
%\item Let $\{n_k\}$ be any increasing sequence with $ n_{k+1} -n_k \sim n_k^{\gamma}$, for some $\gamma\in(0,1]$.  If $\beta<r$ and $s\ge 1$, then 
%$\Big( \sum_k n_k^{\beta s} | T^{n_k}(I-T)^r f|^s \Big)^{1/s}$ is bounded on  $Y$.
\end{enumerate}
\end{thm}

\begin{xrem} 
By Theorem \ref{thm:CCL},  ${\bf Q_{1,2,1}}f$  is bounded on $L^p$ for any $p>1$, but on $L^1$, 
${\bf Q_{1,s,1}}f$ is bounded for $s>2$, and in general,  ${\bf Q_{r,s,r}}f$ is bounded on $L^1$ for $s>1+1/r$.
More generally, by Theorem \ref{thm:LeM}, ${\bf Q_{2r-1,2,r}}f$ is bounded on $L^p$ for any $p>1$ but on $L^1$,
 ${\bf Q_{2r-1,s,r}}f$ bounded  for any $s>2$,  and ${\bf Q_{\alpha,2,r}}f$ is bounded  for any $\alpha<2r-1$.
\end{xrem}

\begin{open} Is ${\bf Q_{\alpha,s,r}}f$ of weak type (1,1)  for $sr=\alpha+1$? 
\end{open}
\begin{open} For what range of $s$ are
  $\lVert  T^nf\rVert_{v(s)}$ and $\lVert  T^n f\rVert_{o(s)}$ of weak type (1,1)? And in general, for what range of $s$ are
  $\lVert  n^r T^n(I-T)^rf\rVert_{v(s)}$ and $\lVert n^r  T^n(I-T)^r f\rVert_{o(s)}$ of weak type (1,1)?
  
\end{open}

Estimates in $L^1$ present more challenges. For the maximal function $Mf=\sup_{n\ge 1} |T^nf|$ only partial results  are available, showing that $Mf$ is a weak (1,1) operator for a restricted class of operators acting on probability spaces 
\cite{BC,karin,Cuny-weak}. 
In what follows (and in Section 5), we provide answers for the above questions under assumptions analogous to those imposed in the study of $Mf$.

For the rest of this section $X=(X,\mathcal B,m)$ denotes a probability space and $\tau$ an invertible measure--preserving transformation on $X$. $T$ is the composition operator $Tf=f\circ \tau$.
To distinguish this case from the general setting, we denote
\begin{equation}
\tau_{\mu}f(x)= T_{\mu}f(x)=\sum_k \mu(k) f(\tau^k x).
\label{eq:1}
\end{equation}

%\begin{defn}
We say that $\mu$ has {\bf bounded angular ratio} (BA) if there exist a constant $C>0$ such that
$|1-\hat\mu(t)|\le C (1-|\hat\mu(t)|)$ for all $|t|\le 1$, where $\hat{\mu}(t)=\sum_k \mu(k) e^{-2\pi i kt}$.
%\end{defn}

 Bellow and Calder\'on   \cite{BC}  showed that the maximal function $M_{\mu}f=
 \sup_{n\ge 1} |\tau^n_{\mu}f|$ is weak (1,1) 
 for centered measures $\mu$ with finite second moment. Such measures have the BA property.
 Further extensions of this result, relaxing the moment condition, were obtained by Dungey \cite{Dunn} (2011), Wedrychowicz  \cite{Chris} (2011), and Cuny \cite{Cuny-weak} (2016), 
 while Losert \cite{Losert1,Losert2} (1999, 2001) constructed measures $\mu$ without the BA property, for which pointwise convergence of $\tau^n_{\mu}$ failed.  

In \cite{Cuny-weak}, Cuny introduced a modification of the BA property, see properties $\text{BA}_1$ and $\text{BA}_2$ in Section 2, to obtain the following $L^1$ results.

 %.    change to measure preserving case 
\begin{thm} \label{thm:weak} %  1.8
\cite{Cuny-weak} 
Let $(X,\mathcal B,m)$ be a probability space and $\tau:X\to X$ an invertible measure preserving transformation.
Let $\mu$ be a probability measure on $\mathbb Z$ with property $\text{BA}_1$. Then 
\begin{enumerate}
\item $\sup_n |\tau_{\mu}^n f|$ is of weak type (1,1) and 
\item  $\tau_{\mu}$ is weak-$L^1$ Ritt: for $r\in \mathbb N$,   
$ m(x\in X: \sup_n n^r |(\tau_{\mu}^n-\tau_{\mu}^{n+r})f(x)|>\lambda) \le \frac{C_r}{\lambda} \lVert f\rVert_1$
for any $f\in L^1(X)$.
\item If  $\mu$ satisfies $\text{BA}_2$, then $\tau_{\mu}$ is Ritt on $L^1$.
\end{enumerate}
\end{thm} 
  
Strengthening the control over the fluctuations of $\tau_{\mu}^n$, we establish weak (1,1) results for generalized square--functions and variation--type operators.

% 1.9
\begin{thm} \label{thm:RittResult.mp} Let $(X,\mathcal B,m)$ be a standard probability space, $\tau:X\to X$ an invertible measure preserving transformation. Let $\mu$ be a probability measure on $\mathbb Z$ with property $\text{BA}_2$. Then, for any $r\ge  0$,
\begin{enumerate}
\item[A.] 
If $sr\ge \alpha+1$ and ${\bf Q^{\tau_{\mu}}_{\alpha,s,r}}f$ is bounded on $L^2$, then 
it is of weak type (1,1); 
\item[B.] Let $s> 1$ and $0\le \beta\le r$. If $ \lVert  n^{\beta} \tau_{\mu}^n (I-\tau_{\mu})^r f \rVert_{v(s)}$  is bounded on $L^2$, then it is of weak type (1,1),
and if $\lVert n^{\beta}  \tau_{\mu}^n (I-\tau_{\mu})^r  f\rVert_{o(s)}$ is bounded on $L^2$, then it is of weak type (1,1).
%\item[C.] Let $\{n_k\}\subset \mathbb N$ be an increasing sequence wit h$n_{k+1}-n_k \sim n_k$. If $\left( \sum_k \max_{n_k\le n\le n_{k+1}} n^{sr}|\tau_{\mu}^n(I-\tau_{\mu})^r|^s \right)^{1/s}$ is bounded on $L^2$ then it  is of weak type (1,1).
\end{enumerate}
\end{thm}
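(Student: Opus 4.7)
The plan is to reduce both parts of the theorem to a vector--valued Calder\'on--Zygmund argument on $\mathbb Z$ via Calder\'on's transference principle. The operator $\tau_\mu^n(I-\tau_\mu)^r$ on $L^p(X)$ is, through $\tau$, convolution by the signed measure $\nu_{n,r}$ on $\mathbb Z$ with $\hat\nu_{n,r}(t)=\hat\mu^n(t)(1-\hat\mu(t))^r$. In every case the quantity being bounded has the form $\|(\tau_\mu^n(I-\tau_\mu)^r f)_n\|_E$ for a Banach sequence space $E$: weighted $\ell^s$ with weight $n^\alpha$ in (A), and weighted variation $v(s)$ or oscillation $o(s)$ with weight $n^\beta$ in (B). Hence the scalar operator is the $E$-norm of the $E$-valued linear operator $f\mapsto (\tau_\mu^n(I-\tau_\mu)^r f)_n$, and the vector--valued form of Calder\'on's transference principle (which applies because $\tau_\mu$ is induced by the $\mathbb Z$--action $\tau$) reduces its weak type $(1,1)$ estimate on $(X,m)$ to the same estimate for the $E$-valued convolution operator $\mathcal T f=(\nu_{n,r}*f)_n$ from $\ell^1(\mathbb Z)$ to $\ell^{1,\infty}(\mathbb Z;E)$.

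Next, I would invoke the Benedek--Calder\'on--Panzone vector--valued Calder\'on--Zygmund theorem in the discrete setting: $L^2$--boundedness of $\mathcal T$ is exactly the standing hypothesis in both (A) and (B), so weak type $(1,1)$ follows once I verify a H\"ormander condition on the $E$-valued kernel $K(k):=(\nu_{n,r}(k))_n$,
\[ \sup_{j\in\mathbb Z}\ \sum_{|k|\ge 2|j|}\|K(k-j)-K(k)\|_E<\infty. \]
Using the Fourier inversion $\nu_{n,r}(k)=\int_0^1 \hat\mu^n(t)(1-\hat\mu(t))^r e^{2\pi ikt}\,dt$, the difference $K(k-j)-K(k)$ carries the extra factor $(e^{-2\pi ijt}-1)$. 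Applying Plancherel to each dyadic annulus $|k|\sim 2^\ell|j|$ and pulling the $E$-norm past the $k$-summation bounds the H\"ormander sum by an $L^2(dt)$ integral of $(1-\cos 2\pi jt)$ multiplied by a weighted square sum in $n$ of $|\hat\mu(t)|^{2n}|1-\hat\mu(t)|^{2r}$ taken inside the $E$-norm.

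This Fourier estimate is where the BA$_2$ hypothesis enters. The standard BA bound $|1-\hat\mu(t)|\lesssim 1-|\hat\mu(t)|$ is strengthened in BA$_2$ (Section 2) to a second--order / derivative-type control that, together with the heat--kernel decay $|\hat\mu(t)|^{2n}\le \exp(-n(1-|\hat\mu(t)|^2))$, yields convergent geometric sums in the annuli after splitting at the natural scale $|t|\sim 1/|j|$. The restrictions $sr\ge\alpha+1$ in (A) and $s>1$, $\beta\le r$ in (B) are precisely the conditions that make the weighted sum in $n$ inside the $E$-norm finite once the orthogonality across $n$ supplied by Plancherel is used. For the variation and oscillation norms in (B), one first translates $v(s)$ and $o(s)$ into Banach--space--valued norms via the Jones--Seeger--Wright / Jones--Kaufman--Rosenblatt--Wierdl reformulation, and then applies the same vector--valued Calder\'on--Zygmund scheme to this Banach target.

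I expect the main obstacle to be the endpoint cases $sr=\alpha+1$ and $\beta=r$, where a term--by--term estimate in $n$ of $\|K(k-j)-K(k)\|_E$ diverges logarithmically. There one cannot afford to lose the orthogonality between $\hat\mu^n(t)(1-\hat\mu(t))^r$ for different $n$; the $E$-norm must stay inside the $t$-integral so that Plancherel and BA$_2$ can be applied to the whole sequence at once. The BA$_2$ hypothesis is formulated precisely so that this joint--in--$n$ Fourier estimate closes cleanly on the critical scale $|t|\sim 1/|j|$, producing the uniform H\"ormander bound and hence the desired weak type $(1,1)$ inequality.
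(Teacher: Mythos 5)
Your overall strategy — transfer to the shift on $\mathbb Z$, then run a vector--valued Calder\'on--Zygmund argument using the $L^2$ bound plus a H\"ormander condition on the $E$-valued kernel, with BA$_2$ supplying the kernel regularity — is genuinely in the same spirit as the paper. The paper runs the CZ decomposition directly on $(X,m)$ via Jones's ergodic Calder\'on--Zygmund decomposition (Proposition~\ref{prop:Roger_CZ} and Theorem~\ref{thm:ergodic_key}), and the cancellation property of the bad function $b_i$ plays exactly the role of the kernel difference $K(k-j)-K(k)$ in your H\"ormander sum. What the ergodic CZ route buys is that one never has to worry about whether the $L^2(X,m)$ hypothesis (for a fixed $\tau$) actually implies $\ell^2(\mathbb Z)$ boundedness of the multiplier — a point your transference reduction quietly assumes.

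There are two real gaps. First, your verification of the H\"ormander condition leans on Plancherel ``applied to each dyadic annulus'' with the $E$-norm pulled inside, but the Banach sequence spaces in play — weighted $\ell^s$ with $s\neq 2$, and the $v(s)$, $o(s)$ (semi)norms — are not Hilbert, so Plancherel is simply not available. The paper avoids this entirely: Theorem~\ref{thm:ergodic_key} uses $L^1$-type Fourier estimates, taking the $E$-norm of $(\hat\Delta_n(t))_n$ pointwise in $t$ and integrating against $\frac1{|t|}$, $|t|$, or summing over $j$ with factors $|j|^{-2}$, $|j|^{-3}$, together with two integrations by parts to exploit the decay of $\hat\Delta_n'$ and $\hat\Delta_n''$. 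This is the step you would need to replace your Plancherel step with; as written, the argument does not close for $s\neq 2$, and therefore not for $v(s)$ or $o(s)$ at all.

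Second, Part B is considerably more delicate than ``translate $v(s)$ into a Banach-valued norm and repeat.'' The $v(s)$ seminorm is a supremum over all partitions, not a fixed Banach sequence norm, and the paper handles it by inserting an auxiliary lacunary-type sequence $\{r_j\}$ with $r_{j+1}-r_j\sim r_j^a$ for $0<a<\min(1,(s-1)/(s+1))$, splitting the variation into a ``short'' part (controlled by Abel summation and the generalized square functions $Q_{\cdot,1,\cdot}$) and a ``long'' part along $\{r_j\}$ (controlled by Lemma~\ref{lem:rn_weak}). This is where the hypothesis $s>1$ is actually used, since the construction requires $(s-1)/(s+1)>0$. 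Your proposal does not address this splitting, and without it the weak type $(1,1)$ estimate for the variation norm does not follow from a single kernel condition. You have the right building blocks — CZ decomposition, $L^2$ hypothesis, BA$_2$ regularity — but the non-Hilbertian target spaces and the variation norm each require the specific machinery the paper develops.
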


%1.10
\begin{thm} \label{thm:main2}
Let $\tau, \mu$  as in Theorem \ref{thm:RittResult.mp} and $r> 0$. % and $r\in \mathbb N$. 
Then
\begin{enumerate}
\item  %a
${\bf Q^{\tau_{\mu}}_{s(r-1/2),s,r}}f$ is of weak type (1,1) for all $s\ge 2$. 
\item %b
 $\lVert  \tau_{\mu}^nf\rVert_{v(s)}$ is of weak type (1,1) for all $s>2$; and $\lVert  \tau_{\mu}^n f\rVert_{o(2)}$ is of weak type (1,1).
 \item %c
 More generally, 
  $\lVert n^{r} \tau_{\mu}^n (I-\tau_{\mu})^r  f\rVert_{v(s)}$ is of weak type (1,1) for all $s\ge 2$; and $\lVert n^{r}  \tau_{\mu}^n  (I-\tau_{\mu})^r  f\rVert_{o(2)}$ is of weak type (1,1).
 \item %d
 Let $\{n_k\}\subset \mathbb N$ be an increasing sequence with $n_{k+1}-n_k \sim n_k$, then\\
 $\left( \sum_k \max_{n_k\le n\le n_{k+1}} n^{sr}|\tau_{\mu}^n(I-\tau_{\mu})^rf|^s \right)^{1/s}$ 
 is of weak type (1,1) for $s\ge 2$. 
\end{enumerate}
\end{thm}
Items a), b) and c) of Theorem \ref{thm:main2} are a straightforward application of Theorems \ref{thm:LeM} and \ref{thm:RittResult.mp}. Item d) is proven in Section 4. 
% because $Q^{\tau_{\mu}}_{s(r-1/2),s,r}f \le Q^{\tau_{\mu}}_{2r-1,2,r}f$.
\medskip

 The article is organized as follows. Section 2, introduces the general framework and develops the main results for the special case of $T_{\mu}$.
 Section 3 presents a key technical lemma, central to the proofs of the general case. In Section 4 we provide detailed proofs for operators of the form $T_{\mu}$, and conclude with the proof of Theorem \ref{thm:RittResult}. Section 5, establishes weak type (1,1) bounds in Theorem \ref{thm:main2}.

%%%%%%%%%%%%%%%%%%%%%%%%%%%%%%%%%%%%%%%%%%%%%%
\section{The case of  $T_{\mu}$} %2

 We begin this section with a brief discussion of sufficient conditions under which $T_{\mu}$ is a Ritt operator on $L^1$. 
 For an $L^1-L^{\infty}$ contraction, Blunkck \cite{Bl} showed, via a clever interpolation argument, that if $T$ is Ritt on $L^2$, it is also Ritt on $L^p$, $1<p<\infty$. This interpolation technique does not extend to the case $p=1$. However, for the special case of $T_{\mu}$,
 Dungey  \cite{Dunn} showed $T_{\mu}$ is Ritt on $L^1$ provided $\mu$ satisfies a set of stronger regularity conditions than BA.
 
 % 2.1
 \begin{thm} (Dungey \cite{Dunn}, Theorem 4.1) \label{thm:D}
Let $\mu$ be a probability measure on $\mathbb N_0$ such that there exists constants $0<\alpha,c<1$ with (i) $\mbox{Re} \, \hat\mu(t) \le 1-c |t|^{\alpha}$, and 
(ii)  $|\hat\mu'(t)|\lesssim |t|^{\alpha-1}$ for $0<|t|\le 1/2$.
If $T\in \mathcal(L^1)$ is power--bounded, then $T_{\mu}$ is Ritt on $L^1$.
\end{thm}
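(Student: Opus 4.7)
The plan is to reduce the operator bound on $T_\mu^n(I-T_\mu)$ to an $\ell^1$ estimate on the signed measure $\rho_n := \mu^{*n} - \mu^{*(n+1)}$ on $\mathbb Z$. Writing $T_\mu^n(I-T_\mu) = T_{\rho_n} = \sum_k \rho_n(k)\,T^k$, power-boundedness of $T$ gives
\[
\|T_\mu^n(I-T_\mu)\|_{\mathcal L(Y)} \le \Big(\sup_k \|T^k\|_{\mathcal L(Y)}\Big)\,\|\rho_n\|_{\ell^1(\mathbb Z)},
\]
so everything reduces to proving $\|\rho_n\|_{\ell^1} = O(1/n)$. This is a statement purely about the measure $\mu$, so the conclusion will hold for every power-bounded $T$ on every Banach space $Y$.

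For the $\ell^1$ estimate I would pass to the Fourier side, using $\hat\rho_n(t) = \hat\mu(t)^n\bigl(1-\hat\mu(t)\bigr)$ on $[-1/2,1/2]$. To extract $\ell^1$ decay from $L^2$ data, fix a threshold $N \ge 1$ and apply Cauchy--Schwarz twice:
\[
\|\rho_n\|_{\ell^1} \;\le\; (2N+1)^{1/2}\|\rho_n\|_{\ell^2} \;+\; C_s\,N^{1/2-s}\,\|k^s\rho_n\|_{\ell^2}
\]
for any $s > 1/2$. Plancherel identifies $\|\rho_n\|_{\ell^2}$ with $\|\hat\rho_n\|_{L^2}$ and $\|k^s\rho_n\|_{\ell^2}$ with a fractional Sobolev norm of $\hat\rho_n$; the endgame is to bound both in $n$ and optimize $N$.

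The Fourier estimates run as follows. From condition (ii) together with $\hat\mu(0)=1$, integration yields $|1-\hat\mu(t)| \lesssim |t|^\alpha$; combined with (i) and the identity $1-|\hat\mu|^2 = 2\,\mathrm{Re}(1-\hat\mu) - |1-\hat\mu|^2$, one obtains the lower bound $1 - |\hat\mu(t)|^2 \gtrsim |t|^\alpha$ in a neighborhood of $0$, while (i) keeps $|\hat\mu|$ bounded strictly below $1$ elsewhere. Hence $|\hat\mu(t)|^{2n} \lesssim \exp(-cn|t|^\alpha)$ uniformly on $[-1/2,1/2]$. Plugging this into $\|\hat\rho_n\|_{L^2}^2 = \int|\hat\mu|^{2n}|1-\hat\mu|^2\,dt$ and performing the scaling $u = n^{1/\alpha}|t|$ gives $\|\rho_n\|_{\ell^2}^2 \lesssim n^{-2-1/\alpha}$; the same kind of scaling applied to the Sobolev norm leads to $\|k^s\rho_n\|_{\ell^2}^2 \lesssim n^{(2s-1)/\alpha - 2}$. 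Optimizing at $N \sim n^{1/\alpha}$ then produces $\|\rho_n\|_{\ell^1} \lesssim n^{-1}$, which is the required estimate.

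The main obstacle I expect is making the Sobolev bound on $\|k^s\rho_n\|_{\ell^2}$ rigorous when $\alpha$ is small. Naive differentiation of $\hat\mu^n(1-\hat\mu)$ produces the factor $|\hat\mu'(t)| \lesssim |t|^{\alpha-1}$, which lies in $L^2$ only when $\alpha > 1/2$; for $\alpha \le 1/2$ one has to work at fractional order $s < \alpha$, controlling the H\"older regularity of $\hat\mu^n$ and $1-\hat\mu$ separately via a Gagliardo seminorm or a Kato--Ponce-type estimate. A secondary technicality, needed for the global exponential bound on $|\hat\mu|^{2n}$, is to rule out interior points $t_0$ with $|\hat\mu(t_0)| = 1$; since $\mu$ is supported on $\mathbb N_0$, condition (i) together with minimal aperiodicity of the support takes care of this.
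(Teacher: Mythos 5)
Your reduction of the operator bound to the kernel estimate $\lVert \rho_n\rVert_{\ell^1}=\lVert \mu^{*n}-\mu^{*(n+1)}\rVert_{\ell^1}=O(1/n)$ is correct and is indeed the right first step; after that point the argument is purely about Fourier transforms of measures on $\mathbb Z$. The $L^2$-Plancherel computations are also correct as far as they go: with $s=1$ one gets $\lVert\rho_n\rVert_{\ell^2}^2\lesssim n^{-2-1/\alpha}$ and $\lVert k\rho_n\rVert_{\ell^2}^2\lesssim n^{(1-2\alpha)/\alpha}$, and optimizing at $N\sim n^{1/\alpha}$ gives $O(1/n)$. So the proposal genuinely proves the theorem for $\alpha>1/2$.

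For $\alpha\le 1/2$, however, there is a hard gap, not just a technicality. Near $t=0$ one has $|\hat\rho_n'(t)|\sim |\hat\mu'(t)|\sim|t|^{\alpha-1}$ (the factor $|\hat\mu(t)|^n$ is $\approx 1$ there), so $\hat\rho_n'\notin L^2$ and $k\rho_n\notin\ell^2$: the quantity $\lVert k\rho_n\rVert_{\ell^2}$ is literally infinite, not merely hard to estimate. Moving to a fractional weight $|k|^s$ cannot be done the way you sketch: the tail Cauchy--Schwarz $\sum_{|k|>N}|\rho_n(k)|\le C_sN^{1/2-s}\lVert k^s\rho_n\rVert_{\ell^2}$ requires $s>1/2$, while you propose $s<\alpha\le 1/2$. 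These two constraints are incompatible, so the fix as described cannot close the argument. (A fractional-order argument can in principle work in the window $1/2<s<1/2+\alpha$, since $|\rho_n(k)|$ should decay like $|k|^{-1-\alpha}$, but then the required bound on $\lVert |D|^s\hat\rho_n\rVert_{L^2}$ is a genuinely nonlocal Gagliardo estimate that you would have to carry out, and you have not.) The cleaner route, and the one actually used in this paper's own Lemma~\ref{lem:basic} (and by Dungey), is to estimate $\rho_n(k)$ pointwise: split $\int\hat\rho_n(t)e(kt)\,dt$ at $|t|\sim 1/|k|$, use the trivial bound on the short interval, and integrate by parts twice on the long interval. The hypotheses (i) and (ii) give exactly the control of $\hat\mu$, $\hat\mu'$ that this requires, and the resulting estimates sum in $k$ for every $\alpha\in(0,1)$ without distinguishing $\alpha\gtrless 1/2$.

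A secondary but real issue: you attribute the global bound $|\hat\mu(t)|\le 1-\varepsilon$ on $\delta\le|t|\le 1/2$ to ``condition (i) together with minimal aperiodicity of the support,'' but aperiodicity is not a hypothesis of the theorem. What actually rules out $|\hat\mu(t_0)|=1$ for some $t_0\ne 0$ is the combination of (i) and (ii): condition (i) with $\alpha<1$ forces $\mu$ to have infinite first moment (otherwise $\mathrm{Re}(1-\hat\mu(t))=o(|t|)\ll |t|^\alpha$), and if $\mu$ were supported on an arithmetic progression $a+q\mathbb Z$ then $\hat\mu'(t)$ would blow up as $t\to p/q$, violating the uniform bound $|\hat\mu'(t)|\lesssim |t|^{\alpha-1}$ of (ii) at the interior point $p/q\in(0,1/2]$. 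You should derive the strict inequality from the stated hypotheses rather than import an extra assumption.
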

%Thm 1.5

 Inspired by % Dungey (2011)  
 \cite{Dunn} and also  \cite{Chris}, Cuny  \cite{Cuny-weak} considered broader spectral conditions, referred to below as properties $\text{BA}_1$ and $\text{BA}_2$, for measures supported on $\mathbb Z$ and showed these suffice to guarantee that $\tau_{\mu}$  is Ritt on $L^1$  in a broader setting.

\begin{defn} % condition M   2.2
Let $\mu$ be a probability measure on $\mathbb Z$ whose Fourier transform $\hat\mu(t)$ %=\sum_k \mu(k) e^{-2\pi i kt}$ that
is twice continuously differentiable on $|t|\in (0,1)$. We say $\mu$ has property {\bf $\text{BA}_1$} if there exists a continuous function $h(t)$ on $|t|\le 1$ with $h(0)=0$, $h(-t)=h(t)$, continuously differentiable on $|t|\in (0,1)$ satisfying the following conditions: there exists some constants $c,C>0$ such that
(i)  $|\hat\mu(t)|\le 1-c\, h(t)$,
(ii)  $|t \, \hat\mu'(t)|\le C\, h(t)$,
(iii)  $|\hat\mu'(t)|\le C\, h'(t)$, and
(iv)  $|t \, \hat\mu''(t)|\le C\, |\hat\mu'(t)|$. 
 If in addition $\hat\mu$ satisfies
(v) $h(t)\le C th'(t)$ for $0<t<1$, then we say $\mu$ satisfies condition {\bf $\text{BA}_2$}. 
\end{defn}

Note that  $\text{BA}_1$ (iii) implies bounded angular ratio (BA)
\begin{equation*} |1-\hat\mu(t)| \le | \int_0^t \hat\mu'(u) du | \lesssim \int_0^{|t|}  h'(u) du =  h(t) \lesssim 1-|\hat\mu(t)|.
\end{equation*}

Centered measures with finite second moment satisfy property $\text{BA}_2$ with $h(t)=t^2$. 
The next example exhibits a non--centered measure  without finite first moment that still satisfies $\text{BA}_2$.
%2.3
\begin{example}   For fixed $0<a<1$, let $\nu_{a}$ be the probability measure on $\mathbb Z$ defined in (\ref{eqn:valpha}). 
Then $\hat\nu_{a}(t) = 1-(1-e^{2\pi i t})^{a}$. From \cite{Dunn}, $\nu_{a}$ satisfies the conditions of Theorem \ref{thm:D} and hence $T_{\nu_{a}} = I-(I-T)^{a}$ is a Ritt operator on $L^1$. Additionally, $\nu_{a}$ satisfies property $\text{BA}_2$.

\end{example}

More examples of measures with property $\text{BA}_2$ can be found in Cuny \cite{Cuny-weak}.

%%%%%%%%
For general Banach function spaces, including $L^1$, we obtain the following results.

\begin{thm}\label{thm:main} %2.5
Let $\mu$ be as in Theorem \ref{thm:weak} and $Y$ a Banach function space. 
Let $T$ be a (doubly) power--bounded linear operator on $Y$.
 If $r>0$ and  $sr>\alpha+1$, then ${\bf Q_{\alpha,s,r}^{T_{\mu}}}f$ is bounded  on $Y$. 
\end{thm}

%Cor 2.6
\begin{cor} \label{cor:sup} 
Let $\mu$, $T$ and $Y$ as in Theorem \ref{thm:main} and $\{n_k\}\subset \mathbb N$ an increasing sequence with $n_{k+1}-n_k\sim n_k$.
If $0\le \beta<r$ and $s\ge 1$,
$\sup_n n^{\beta} |T_{\mu}^n (I- T_{\mu})^{r}f|$ and \\ $\Big( \sum_k \max_{n_{k}\le n\le n_{k+1}} n^{\beta s} |T_{\mu}^{n}(I-T_{\mu})^{r} f|^s\Big)^{1/s} $
are both bounded on $Y$, 
%$\sup_n n^r |T_{\mu}^n (I- T_{\mu})^{r}f|<\infty$
$\lim_{n\to \infty} n^{r} \lVert T_{\mu}^n (I- T_{\mu})^{r}f \rVert = 0$ and
 $\lim_{n\to \infty} n^{\beta} |T_{\mu}^n (I- T_{\mu})^{r}f | = 0$
 a.e..
\end{cor}

%Prop 2.7
\begin{prop} \label{prop:longvar}
Let $\mu$, $T$ and $Y$ as in Theorem \ref{thm:main}.
Let  $\{n_k\}$ be an increasing sequence with $n_{k+1} -n_k \sim n_k^{\gamma}$ 
for some $0<\gamma\le 1$. Let $s\ge 1$, $r\ge 0$. 
%\begin{enumerate}
%\item  
If  $\beta<r+(1-\gamma)(1-1/s)$, 
 then 
$\Big( \sum_k n_k^{\beta s}  \max_{ n_k\le n,m\le n_{k+1} } |(T_{\mu}^n -T_{\mu}^{m})(I-T_{\mu})^r f|^s \Big)^{1/s} $ and  
$\Big( \sum_k n_k^{\beta s} |(T_{\mu}^{n_k}-T_{\mu}^{n_{k+1}})(I-T_{\mu})^rf|^s \Big)^{1/s}$ are bounded on $Y$. 
\end{prop}

%Prop 2.8
\begin{prop} \label{prop:genvar}
Let $\mu$, $T$ and $Y$ as in Theorem \ref{thm:main}.
Let $r> 0$.
 If  $\beta< r$ and $s\ge 1$,        
then both   $\lVert n^{\beta} T_{\mu}^n(I-T_{\mu})^r f \rVert_{v(s)}$ 
 and
 $\lVert n^{\beta} T_{\mu}^n(I-T_{\mu})^r f \rVert_{o(s)}$ are bounded on $Y$.

\end{prop}

%%%%

%%%%%%%%%%%%%%%%%%%%%%%%
\section{Auxiliary Lemma} %3
Across these notes, $c$ and $C$ denote constants whose values may change from one instance to the next, and $e(x)=e^{2\pi i x}$. For  $0\le x,y$, we say $x \lesssim y$ if there exists a constant $c>0$ such that $x \le c y$, and $x\sim y$ if $x \lesssim y$ and $y \lesssim x$.

For completeness, we recall the Stein--Minkowski integral inequality \cite{stein_M}:
Suppose that $(X_1,\nu_1)$ and $(X_2,\nu_2)$ are two $\sigma$--finite measure spaces and $g:X_1\times X_2\to \mathbb R$
is measurable. Then, for any $1\le s <\infty$,
\[\left[ \int_{X_2}
\left| \int _{X_1} g(x_1,x_2)\, d\nu_{1}(x_1) \right|^{s} d\nu_{2}(x_2)\right]^{1/s}~\leq 
\int _{X_1}\left(\int _{X_2}|g(x_1,x_2)|^{s}\,d\nu_{2}(x_2)\right)^{1/s} d\nu_1(x_1).\]

\begin{lemma}\label{lem:basic} 
Let $S=\{\Delta_n\}$ be a sequence of (finite) signed measures on $\mathbb Z$ whose Fourier transforms $\hat\Delta_n$ 
are twice continuously differentiable on $|t|\in (0,1)$. Let $T$ be a linear (doubly) power--bounded operator on a Banach function space, and
let
\begin{align*} 
A= & \int_{|t|<1/2} \frac 1{|t|} \Big( \sum_n |\hat \Delta_n(t)|^s \Big)^{1/s} dt,  \quad
& B=  \int_{|t|<1/2} |t| \Big( \sum_n |\hat \Delta''_n(t)|^s \Big)^{1/s} dt, \\
 C=& \sum_{k\neq 0} \frac 1{|k|} \Big( \sum_n |\hat \Delta_n(1/|2k|)|^s \Big)^{1/s},  \quad
 & D= \sum_{k\neq 0} \frac 1{|k|^2} \Big( \sum_n |\hat \Delta'_n(1/|2k|)|^s \Big)^{1/s}. 
\end{align*}
If $A, B, C$ and $D$ are all finite,  then, for any $f\in Y$,
$\Big\lVert  \Big( \sum_n |T_{\Delta_n}f|^s \Big)^{1/s} \Big\rVert \lesssim \lVert f\rVert.$
\end{lemma}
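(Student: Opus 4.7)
The plan is to reduce the vector-valued operator bound to the scalar inequality $\sum_{k\in\mathbb Z}a_k\lesssim A+B+C+D$, where $a_k:=\bigl(\sum_n|\Delta_n(k)|^s\bigr)^{1/s}$, and then to establish the latter by Fourier inversion on the torus followed by two integrations by parts. For the reduction, write $T_{\Delta_n}f=\sum_k\Delta_n(k)T^kf$, apply the triangle inequality, and use Minkowski's inequality in $\ell^s_n$ (valid for $s\ge 1$) to obtain, pointwise in a function-space setting (or after taking norms in a general Banach space), $\bigl(\sum_n|T_{\Delta_n}f|^s\bigr)^{1/s}\le\sum_k a_k|T^kf|$. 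Taking $Y$-norms and invoking power-boundedness $\|T^kf\|\lesssim\|f\|$ then yields the desired bound as soon as $\sum_k a_k$ is controlled.

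To estimate $a_k$, I start from $\Delta_n(k)=\int_{-1/2}^{1/2}\hat\Delta_n(t)\,e(kt)\,dt$. The case $k=0$ is handled by the trivial inequality $F(t):=(\sum_n|\hat\Delta_n(t)|^s)^{1/s}\le F(t)/(2|t|)$, which holds for $|t|\le 1/2$ and gives $a_0\lesssim A$. For $k\ne 0$ I split the domain at $\pm 1/|k|$. On $|t|\le 1/|k|$ I estimate directly, producing $\int_{|t|\le 1/|k|}|\hat\Delta_n(t)|\,dt$. On $1/|k|\le|t|\le 1/2$ I integrate by parts twice; by periodicity of $\hat\Delta_n$ the $\pm 1/2$ boundary contributions cancel, and what survives is a first-order boundary term at $\pm 1/|k|$ of size $|\hat\Delta_n(\pm 1/|k|)|/|k|$, a second-order boundary term of size $|\hat\Delta_n'(\pm 1/|k|)|/|k|^2$, and the integral $|k|^{-2}\int_{|t|\ge 1/|k|}|\hat\Delta_n''(t)|\,dt$. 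Applying Minkowski once more in $\ell^s_n$ passes from $|\hat\Delta_n|$, $|\hat\Delta_n'|$, $|\hat\Delta_n''|$ to $F$, $G$, $H$. Summing in $k\ne 0$ and switching the order of summation and integration via Fubini identifies the four contributions with $A$ (using $\#\{k:1/|k|\ge|t|\}\sim|t|^{-1}$), $C$, $D$, and $B$ (using $\sum_{|k|\ge 1/|t|}|k|^{-2}\lesssim|t|$), respectively.

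The main delicacy lies in the bookkeeping of the two integrations by parts. First, the four boundary contributions at $\pm 1/2$ coming from the two outer subintervals $(-1/2,-1/|k|)$ and $(1/|k|,1/2)$ must be paired and shown to cancel, which relies on the identity $\hat\Delta_n(1/2)=\hat\Delta_n(-1/2)$ and its analogue for $\hat\Delta_n'$ (i.e., on the fact that the discrete-group Fourier transform is $1$-periodic). Second, the boundary terms at $\pm 1/|k|$ naturally involve both $\hat\Delta_n(1/|k|)$ and $\hat\Delta_n(-1/|k|)$, whereas $C$ and $D$ as stated only feature $\hat\Delta_n(1/|k|)$; for the real-valued signed measures arising in all applications, $|\hat\Delta_n(-t)|=|\hat\Delta_n(t)|$ (and likewise for the derivatives), so the two sides agree up to a factor of two, and for general complex $\Delta_n$ one simply augments $C$ and $D$ with their mirrored counterparts at $-1/|k|$ at the cost of an absolute constant. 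Beyond this bookkeeping, the argument reduces to the Fubini computations above, and the finiteness of $A+B+C+D$ closes the estimate.
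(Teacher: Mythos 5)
Your proof is correct and follows essentially the same route as the paper's: reduce via the triangle/Minkowski inequality to the coefficient estimate $\sum_k (\sum_n|\Delta_n(k)|^s)^{1/s}\lesssim A+B+C+D$, then estimate each Fourier coefficient by splitting at $|t|\sim 1/|k|$, estimating directly on the inner range, and integrating by parts twice on the outer range where $1$-periodicity kills the $\pm 1/2$ boundary terms. The paper handles the $\pm 1/|k|$ boundary-term bookkeeping exactly as you flag it, by invoking $\hat\Delta_n(-t)=\overline{\hat\Delta_n(t)}$ for real signed measures.
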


\begin{proof}
Without loss of generality, we assume $\sup_{n\ge 0} \lVert T^n \rVert = 1$ if $\cup_n \mbox{supp}(\Delta_n) \subset \mathbb N_0$ and  assume $\sup_{n\in \mathbb Z} \lVert T^n \rVert = 1$ otherwise.

\begin{align*}   \Big\lVert \Big( \sum_n |T_{\Delta_n}f|^s \Big)^{1/s} \Big\rVert
= & \Big\lVert \Big(\sum_n  \Big| \sum_k \Delta_n(k) T^kf\Big|^{s} \Big)^{1/s} \Big\rVert \\
\lesssim &   \Big\lVert   \Big( \sum_n  \Big|  \sum_{k\neq 0} \int_{|t|<1/|2k| } \hat \Delta_n(t) e(kt) dt  \,  T^kf \Big|^{s} \Big)^{1/s} \Big\rVert \\
 & +  \Big\lVert    \Big( \sum_n  \Big|  \sum_{k\neq 0}   \int_{1/|2k|<|t|<1/2} \hat \Delta_n(t) e(kt) dt  \,  T^kf \Big|^{s} \Big)^{1/s} \Big\rVert \\
& +     \Big( \sum_n  \Big|   \Delta_n(0)   \Big|^{s} \Big)^{1/s} \Big\lVert f \Big\rVert \\
 =  &  \mbox{I} + \mbox{II}+   \Big( \sum_n  \big|   \Delta_n(0)   \big|^{s} \Big)^{1/s} \Big\lVert f \Big\rVert.
\end{align*}

Applying the Stein--Minkowski integral inequality, we obtain the following estimate 
for the first term.
\begin{align*}
  \mbox{I} 
=  & \Big\lVert   \Big( \sum_n  \Big|  \sum_{k\neq 0} \int_{|t|<1/|2k|} \hat \Delta_n(t) e(kt) dt  \,  T^kf\Big|^{s} \Big)^{1/s} \Big\rVert \\
\le &  \lVert f\rVert \sum_{k\neq 0} \Big( \sum_n \Big|   \int_{|t|<1/ |2k| } \hat \Delta_n(t) e(kt) dt   \Big|^{s} \Big)^{1/s}\\
\le &   \lVert f\rVert \int_{|t|<1/2} \frac1{|t|}  \Big( \sum_n  \Big| \hat \Delta_n(t) \Big|^s \Big)^{1/s}  \, dt
= A \lVert f\rVert.
  \end{align*}
  Also 
\[ \Big( \sum_n  \big|   \Delta_n(0)   \big|^{s} \Big)^{1/s} \le  \int_{|t|<1/2} \Big( \sum_n  \Big| \hat \Delta_n(t) \Big|^s \Big)^{1/s}  \, dt
\le A.\]

For the second term, by the Stein--Minkowski integral inequality, we have
\begin{align*}
\mbox{II} \le & \Big\lVert   \Big( \sum_n  \Big|  \sum_{k\neq 0}   \int_{1/|2k| <|t|<1/2} \hat \Delta_n(t) e(kt) dt  \  T^kf \Big|^{s} \Big)^{1/s} \Big\rVert \\
\le & \Big\lVert  \sum_{k\neq 0}  \Big( \sum_n  \Big |  \int_{1/|2k|<|t|<1/2} \hat \Delta_n(t) e(kt) dt \Big |^s \Big)^{1/s} |T^kf | \Big\rVert\\
\le & \lVert f\rVert  \sum_{k\neq 0}  \Big( \sum_n  \Big |  \int_{1/|2k|<|t|<1/2} \hat \Delta_n(t) e(kt) dt \Big |^s \Big)^{1/s}.
\end{align*}

Since $\Delta_n(t), \Delta'_n(t)$ and $e(kt)$ are 1-periodic, $\Delta_n(-t)=\overline{\Delta_n(t)}$, and\\ $\Delta'_n(-t)=\overline{\Delta'_n(t)}$, we estimate
\begin{align}
\Big| \int_{1/|2k| <|t|<1/2} & \hat \Delta_n(t) e(kt) dt \Big| \le  
 \Big|\int_{1/|2k| <|t|<1/2} \hat \Delta'_n(t) \frac{e(kt)}{2\pi k} dt \Big| \nonumber \\
& + \Big|\frac{\hat \Delta_n(1/|2k| ) e(k/|2k| )}{2\pi k} - \frac{\hat \Delta_n(-1/|2k| ) e(-k/|2k| )}{2\pi k} \Big| \label{eq:1stderivative}\\   %eq2
\lesssim &  \Big| \int_{1/|2k| <|t|<1/2} \hat \Delta''_n(t) \frac{e(kt)}{4\pi^2 k^2} dt \Big| %\nonumber 
  + \Big| \frac{\hat \Delta_n(1/|2k| ) }{2\pi k} \Big| 
  +  \Big| \frac{\hat \Delta'_n(1/|2k| )}{4\pi^2 k^2}\Big|. 
 \nonumber
 \end{align}

By the Stein--Minkowski integral inequality,
\begin{align*}
\sum_{k\neq 0}   \Big( \sum_n  \Big |  \int_{1/|2k|<|t|<1/2} \hat \Delta''_n(t) \frac{e(kt)}{4\pi^2 k^2}  dt \Big |^s \Big)^{1/s} 
\lesssim  & \sum_{k\neq 0} \frac 1{k^2}  \int_{1/|2k|<|t|<1/2} \Big( \sum_n  \Big | \hat \Delta''_n(t) \Big |^s \Big)^{1/s} \ dt  \\
\lesssim & \int_{0<|t|<1/2}  |t| \,  \Big( \sum_n  \Big | \hat \Delta''_n(t) \Big |^s \Big)^{1/s}\, dt  = B .
\end{align*}

Thus, $\mbox{II}\lesssim (B+C+D) \lVert f\rVert$.

\end{proof}

The proof of this lemma can be adapted to the following setting.

\begin{lemma}\label{lem:basic2} 
Let $\Delta_n$ and $T$ as in Lemma \ref{lem:basic}, $\{n_k\}$ an increasing sequence, and \\$J_k=[n_k,n_{k+1})$.
Let
\begin{align*}
A= & \int_{|t|<1/2} \frac 1{|t|}        \Big( \sum_k n_k^{\beta} \max_{n\in J_k} |\hat \Delta_{n}(t)|^s \Big)^{1/s} dt,  \quad
& B=  \int_{|t|<1/2} |t|     \Big( \sum_k n_k^{\beta} \max_{n\in J_k} |\hat \Delta''_{n}(t)|^s \Big)^{1/s} dt, \\
C=&\sum_{l\neq 0} \frac 1{|l|}       \Big( \sum_k n_k^{\beta} \max_{n\in J_k} |\hat \Delta_{n}(1/|2l|)|^s \Big)^{1/s},  \quad
& D= \sum_{l\neq 0} \frac 1{|l|^2}       \Big( \sum_k n_k^{\beta} \max_{n\in J_k} |\hat \Delta'_{n}(1/|2l|)|^s \Big)^{1/s}. 
\end{align*}
If $A, B, C$ and  $D$ are all finite, then, for any $f\in Y$,
$ \Big\lVert    \Big( \sum_k n_k^{\beta} \max_{n\in I_k} |T_{\Delta_{n}}f|^s \Big)^{1/s} \Big\rVert \lesssim  \lVert f \rVert.$
\end{lemma}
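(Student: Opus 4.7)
The plan is to repeat the proof of Lemma~\ref{lem:basic} with one new maneuver inserted at every step: the maximum $\max_{n\in I_k}$ is pushed through each non-negative operation (absolute value, sum of positive terms, integral of a non-negative integrand) until it lands directly on the factor $|\hat\Delta_n(t)|$, $|\hat\Delta_n''(t)|$, $|\hat\Delta_n(1/|l|)|$, or $|\hat\Delta_n'(1/|l|)|$ appearing inside the target quantities $A,B,C,D$.

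First, I would invoke Fourier inversion $\Delta_n(k)=\int_{|t|<1/2}\hat\Delta_n(t)\,e(kt)\,dt$ and split the sum $T_{\Delta_n}f=\sum_k\Delta_n(k)T^kf$ according to whether $|t|<1/(2|k|)$, $1/(2|k|)<|t|<1/2$, or $k=0$, recovering pointwise the decomposition $T_{\Delta_n}f=\mathrm{I}_n f+\mathrm{II}_n f+\Delta_n(0)f$ used in Lemma~\ref{lem:basic}. Applying the triangle inequality inside $\max_{n\in I_k}$ and then Minkowski for the weighted $\ell^s_k$-norm (with weights $n_k^{\beta/s}$) reduces matters to three independent estimates: for $\bigl(\sum_k n_k^\beta\max_{n\in I_k}|\mathrm{I}_n f|^s\bigr)^{1/s}$, for $\bigl(\sum_k n_k^\beta\max_{n\in I_k}|\mathrm{II}_n f|^s\bigr)^{1/s}$, and for $\bigl(\sum_k n_k^\beta\max_{n\in I_k}|\Delta_n(0)|^s\bigr)^{1/s}\lVert f\rVert$.

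For the term $\mathrm{I}_n$, I would push $\max_{n\in I_k}$ past the Fourier-side sum over $l$ (since $|T^l f|$ carries no $n$-dependence), past the absolute value, and past the integral over $t$, then apply Minkowski's integral inequality to exchange the weighted $\ell^s_k$-sum with the $t$-integral. A Fubini bound identical to the one in Lemma~\ref{lem:basic} then yields the quantity $A$, and the $\Delta_n(0)$ piece is dominated by $A$ in the same fashion. For $\mathrm{II}_n$, the two-step integration by parts displayed in~(\ref{eq:1stderivative}) holds verbatim for each fixed $n$, producing three sub-pieces built from $\hat\Delta_n''(t)$ on $1/(2|l|)<|t|<1/2$ and from the boundary values $\hat\Delta_n(\pm 1/|l|)$, $\hat\Delta_n'(\pm 1/|l|)$; the same push-max-inside machinery applied to each sub-piece yields the quantities $B$, $C$, and $D$ respectively.

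The argument is genuinely an adaptation rather than a new proof, so the main obstacle is clerical: one must check, at every push, that the operation being bypassed preserves non-negativity (so that the interchange with $\max$ is valid), and one must invoke Minkowski for the $n_k^\beta$-weighted $\ell^s_k$-sum at precisely the moment the expression is in product form. No new analytic ingredient beyond those used in Lemma~\ref{lem:basic} is needed.
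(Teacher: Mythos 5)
Your proposal is correct and is exactly the adaptation the paper intends when it says ``the proof of Lemma~\ref{lem:basic} can be adapted'': you reuse the Fourier-inversion decomposition and the integration-by-parts step verbatim, and at each stage push $\max_{n\in I_k}$ through the non-negative operations (triangle inequality inside the max, max of an integral bounded by the integral of the max, Minkowski for the $n_k^{\beta}$-weighted $\ell^s_k$-norm and its integral form, all valid for $s\ge 1$) until it sits on $|\hat\Delta_n|$, $|\hat\Delta_n'|$, $|\hat\Delta_n''|$, producing the quantities $A,B,C,D$ of the statement. No substantive difference from what the authors have in mind.
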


%%%%%%%%%%%%%%%%%%%%%%%%%%%%%%%%
\section{Proofs of Results for $T_{\mu}$ and Theorem \ref{thm:RittResult}}%4

%2.5
\noindent \textit{Proof of Theorem \ref{thm:main}:}\\
Let $\Delta_n$ be
the measure on the integers defined by $\hat\Delta_n=n^{\alpha/s} \hat\mu^n (1-\hat\mu)^r$, that is, 
$T_{\Delta_n}=n^{\alpha/s} T_{\mu}^n (I-T_{\mu})^r$. Using Lemma \ref{lem:basic} with $\Delta_n$, it suffices to show that the corresponding terms defined in the lemma, A, B, C, and D, are bounded.

We assume $\alpha\ge -1$ because, if $\alpha<-1$, ${\bf Q_{\alpha,s,r}}f \le  {\bf Q_{-1,s,r}}f$.

If $\alpha>-1$, dominating the sum by an integral and using property $\text{BA}_1$ (i), we obtain
\[\sum_n n^{\alpha}  |\hat\mu(t)|^{ns} \le \sum_n n^{\alpha}  (1-c \, h(t))^{ns}    \lesssim   \frac 1{h(t)^{\alpha+1}}.\]
If $\alpha=-1$, 
\[\sum_n \frac 1n  |\hat\mu(t)|^{ns} \le \sum_n \frac 1n  (1-ch(t))^{ns}= | \ln(1-(1-ch(t))^s) | 
 \lesssim \frac 1{h(t)^{\gamma}}\]
for any $\gamma>0$.

For term A, using properties $\text{BA}_2$, we estimate
\begin{align}
A=\int_{|t|<1/2} \frac1{|t|} &  \Big( \sum_n  \Big| \hat\Delta_n(t) \Big|^s \Big)^{1/s} \ dt \nonumber \\
= &    \int_{|t|<1/2}  \Big( \sum_n n^{\alpha} | \hat\mu(t)|^{ns} \Big)^{1/s} \frac{|1-\hat\mu(t)|^{r}}{|t|} \ dt \nonumber \\
\lesssim  & \begin{cases} \int_{0<t<1/2} h(t)^{r-(\alpha+1)/s-1}  \, h'(t) \, dt & \mbox{ for } \alpha>-1 \label{eq:A} \\ %%eq3
 \int_{0<t<1/2} h(t)^{r-\gamma/s-1}  \, h'(t) \, dt & \mbox{ for } \alpha=-1. 
 \end{cases}
  \end{align}
When $\alpha>-1$, the integral is finite for $sr >\alpha+1$, and when $\alpha=-1$, the integral is finite for $r>0$ 
since we can choose $\gamma$ arbitrarily small, say $\gamma=sr/2$.

With $B_n=T^n (I-T)^r$,
\begin{align}
|\hat B'_n(t)| = | n \hat\mu^{n-1}(t) (\hat \mu'(t)) (1-\hat\mu(t))^r 
     - \hat\mu^{n}(t) \hat \mu'(t)(1-\hat\mu(t))^{r-1}|, \label{eq:Bprime} 
\end{align}
and by $\text{BA}_1$,
\begin{align*}
|\hat B''_n(t)| = & \big| n(n-1) \hat\mu^{n-2}(t) (\hat \mu'(t))^2 (1-\hat\mu(t))^r \\
   &  +n \hat\mu^{n-1}(t) \hat \mu''(t)(1-\hat\mu(t))^r 
 - 2n r \hat\mu^{n-1}(t) (\hat \mu'(t))^2 (1-\hat\mu(t))^{r-1}  \\
&+ r (r-1) \hat\mu^{n}(t) (1-\hat\mu(t))^{r-2}  (\hat\mu'(t))^2
 - r \hat\mu^{n}(t) (1-\hat\mu(t))^{r-1}  \hat \mu''(t) \big| \\
\lesssim & \Big[ n^2 |\hat\mu^{n-2}(t)| h^{2}(t) + 
n |\hat\mu^{n-1}(t)| h(t) 
 + |\hat\mu^{n}(t)|   \Big]  \frac{h^{r-1}(t)|\hat\mu'(t)|}{|t|}.
\end{align*}
Thus, for $\alpha>-1$,
\begin{align}
\Big(\sum_n   |\hat \Delta''_n (t)|^s\Big)^{1/s}= & \Big( \sum_n n^{\alpha}  |\hat B''_n (t)|^s\Big)^{1/s}  %\nonumber 
\lesssim    
h(t)^{(r-1)-(\alpha+1)/s}  \frac{|\hat\mu'(t)|}{|t|}.\label{eq:B} %eq4
\end{align}
% II_1
Since $(1+\alpha)< s r $,
\begin{align*}
B= &  \int_{|t|<1/2} |t| \, \Big( \sum_n   |\hat\Delta''_n(t)|^s  \Big)^{1/s} dt   
 \lesssim   \int_{0<t<1/2}   h(t)^{(r-1)-(\alpha+1)/s } \, h'(t) \, dt <\infty.   
\end{align*} 

When $\alpha=-1$, choosing $0<\gamma<r$, the estimate is
\[   \int_{|t|<1/2} |t| \, \Big( \sum_n   |\hat\Delta''_n(t)|^s  \Big)^{1/s} dt   
 \lesssim   \int_{0<t<1/2}   h(t)^{r-\gamma-1 } \, h'(t) \, dt <\infty.  \]

For the remaining terms, we address the case $\alpha>-1$ since the estimates for the case $\alpha=-1$ follow similar arguments.

% II_2
For $(1+\alpha)< s r $, by $\text{BA}_2$ and dominating the $k$th term by an integral over intervals $(\frac 1{k+1},\frac 1k)$, we have
\begin{align*}
 C= & \sum_{k\neq 0} \frac 1{|k|}
\Big(    \sum_n   |\hat\Delta_n(1/|2k| )|^s  \Big)^{1/s} \nonumber \\
\lesssim &  \sum_{k\neq 0} \frac 1{|k|} \Big(    \sum_n n^{\alpha} |\hat\mu(1/|2k|)|^{ns} 
|1-\hat\mu(1/|2k|)|^{s r}  \Big)^{1/s} \nonumber  \\
\lesssim & \sum_{k> 0} \frac 1k  h(1/2k)^{r - (\alpha+1)/s}
\lesssim c+ \int_0^{1/2} \frac{h(t)^{r - (\alpha+1)/s}}t \, dt   \nonumber \\
< & c+ \int_0^{1/2} h(t)^{r - (\alpha+1)/s-1} h'(t)\, dt <\infty.
%\label{eq:II2} %eq5
\end{align*}

From (\ref{eq:Bprime}) and $\text{BA}_1$(ii) 
\begin{equation} \Big( \sum_n  |\hat\Delta'_n(t )|^s\Big)^{1/s} = 
\Big( \sum_n n^{\alpha}  |\hat B'_n(t )|^s\Big)^{1/s}
 \lesssim  h(t)^{r-1-(\alpha+1)/s} |\mu'(t)| \lesssim \frac{h(t)^{r-(\alpha+1)/s}}{|t|}.\label{eqn:C}
 \end{equation}
Thus
\begin{align*}
D = & \sum_{k\neq 0} \frac 1{|k|^2}
\Big(    \sum_n   |\hat\Delta'_n(1/|2k| )|^s  \Big)^{1/s} %\nonumber 
 % \lesssim     \sum_{k\neq 0} \frac 1{|k|^2}   h(1/2k)^{r-1 - (\alpha+1)/s} |\mu'(1/k)|\nonumber \\
   \lesssim      \sum_{k\neq 0} \frac 1{|k|}   h(1/2k)^{r - (\alpha+1)/s}  <\infty.  
\end{align*}~\hfill$\square$
  \bigskip
  
  %%%%. corolary 2.6
  \noindent \textit{ Proof of Corollary \ref{cor:sup}:}\\
  Let $\{n_k\}$ be an increasing sequence in $\mathbb N$ such that $n_{k+1}-n_k\sim n_k$.  Then we have $n_{k+1}=n_{k+1}-n_k+n_k\lesssim 2n_k$.

Applying Abel's summation 
\begin{align*}
\sum_{j=1}^u j^{\beta } T_{\mu}^{j-1}(1-T_{\mu})f = & \sum_{j=1}^u j^{\beta } T_{\mu}^{j-1}- \sum_{j=1}^u j^{\beta } T_{\mu}^jf 
=  \sum_{j=0}^{u-1} (j+1)^{\beta } T_{\mu}^{j}f- \sum_{j=1}^u j^{\beta } T_{\mu}^jf\\
= &  \sum_{j=0}^{u-1} ((j+1)^{\beta } - j^{\beta } )T_{\mu}^{j}f- u^{\beta } T_{\mu}^uf\\
= &  \sum_{j=1}^{u} (j^{\beta } - (j-1)^{\beta } )T_{\mu}^{j-1}f- u^{\beta } T_{\mu}^uf\\
\end{align*}
Thus
\begin{align}
u^{\beta }  |T_{\mu}^{u} f| 
\le & \sum_{j=1}^{u} (j^{\beta }-(j-1)^{\beta } )|T_{\mu}^{j-1} f| 
 + \sum_{j=1}^u j^{\beta } |T_{\mu}^{j-1}(1-T_{\mu})f| \nonumber \\
\lesssim & \sum_{j=1}^{u} j^{\beta  -1} |T_{\mu}^{j-1}  f| 
 + \sum_{j=1}^{u} j^{\beta } |T_{\mu}^{j-1}(1-T_{\mu})f|.\label{eqn:abel}
\end{align}
Decomposing $n\in(n_k,n_{k+1}]$ as $n=n_{k-1}+l$ with $l\in (n_k-n_{k-1},n_{k+1}-n_{k-1}]$, and applying the above to the function $T_{\mu}^{n_{k-1}}(1-T_{\mu})^rf$ with $u=l$,
\begin{align*}
n^{\beta } & |T_{\mu}^{n}(1-T_{\mu})^r f| \lesssim \frac{n^{\beta}}{l^{\beta}}\left[
\sum_{j=1}^{l} j^{\beta  -1} |T_{\mu}^{j+n_{k-1}-1} (1-T_{\mu})^r f| 
 + \sum_{j=1}^{l} j^{\beta} |T_{\mu}^{j+n_{k-1}-1}(1-T_{\mu})^{r+1}f| \right] \\
\lesssim & \sum_{j=n_{k-1}}^{n_{k+1}-1} j^{\beta  -1} |T_{\mu}^j (1-T_{\mu})^r f| 
 + \sum_{j=n_{k-1}}^{n_{k+1}-1} j^{\beta} |T_{\mu}^{j} (1-T_{\mu})^{r+1} f |
  \end{align*}
  because $\frac{n}{l} \le \frac{n_{k+1}}{n_k-n_{k-1}}\lesssim \frac{2n_k}{n_{k-1}}\lesssim  4$.
  Thus 
 \begin{align*}
 \left( \sum_k \right. & \left. \max_{n_{k}<n\le n_{k+1}}  n^{\beta s}|T_{\mu}^{n}(I-T_{\mu})^{r} f|^s \right)^{1/s}\le 
 \sum_k  \max_{n_{k}<n\le n_{k+1}}  n^{\beta}|T_{\mu}^{n}(I-T_{\mu})^{r} f| \nonumber \\
 \lesssim & \sum_k \sum_{j=n_{k-1}}^{n_{k+1}-1} j^{\beta  -1} |T_{\mu}^j (1-T_{\mu})^r f| 
 + \sum_k \sum_{j=n_{k-1}}^{n_{k+1}-1} j^{\beta} |T_{\mu}^j (1-T_{\mu})^{r+1} f |. \nonumber \\
\lesssim &  \ {\bf Q_{\beta-1,1,r}}f + {\bf Q_{\beta,1,r+1}}f.  %\label{eqn:summax}
\end{align*}
By Theorem \ref{thm:main},
 both generalized square functions are bounded in $Y$ when $0\le \beta< r$. Therefore 
$\sup_n n^{\beta } |T_{\mu}^n(I - T_{\mu})^{r})f|$ is also bounded in $Y$.

By Corollary 6.2 of \cite{CCL}, $\lim_{n\to \infty} n^{r} \lVert T_{\mu}^n(I - T_{\mu})^{r}f \rVert = 0$.

From these results, it follows $n^{\beta} |T_{\mu}^n(I - T_{\mu})^{r}f|\to 0$ a.e. as well.
~\hfill$\square$ \bigskip\\

Note: In  \cite{CCL} Proposition 6.4, it was shown that 
\begin{equation*}
\Big( \sum_k \max_{n_{k}<n\le n_{k+1}} n^{2\beta} |T_{\mu}^{n}(I-T_{\mu})^{r} f|^2\Big)^{1/2} \lesssim 
{\bf Q_{2\beta -1,2,r}}f + {\bf Q_{2(\beta+1)-1 ,2,r+1}}f.
\end{equation*}
This result holds for general $s>1$. 
Let  $s'=s/(s-1)$.
From equation (\ref{eqn:abel}) with $\beta=1$,
\begin{align*}
 |T_{\mu}^{u} f| 
\lesssim & \frac 1u\sum_{j=1}^{u}  |T_{\mu}^{j-1}  f| 
 + \frac 1u \sum_{j=1}^{u} j |T_{\mu}^{j-1}(1-T_{\mu})f|\\
 \le & \left( \frac 1u\sum_{j=1}^{u}  |T_{\mu}^{j-1}  f|^s \right)^{1/s} +  
  \left( \sum_{j=1}^{u} j^{s/s'} |T_{\mu}^{j-1}(1-T_{\mu})f|^s\right)^{1/s} \  \frac 1u \ \left(\sum_{j=1}^{u} j^{s'/s} \right)^{1/s'}.
\end{align*}
Since $s/s'=s-1$ and $\sum_{j=1}^{u} j^{1/(s-1)} \lesssim u^{1+1/(s-1)}=u^{s'}$, 
\[ |T_{\mu}^{u} f| \lesssim \left( \frac 1u\sum_{j=1}^{u}  |T_{\mu}^{j-1}  f|^s \right)^{1/s} +  
  \left( \sum_{j=1}^{u} j^{s-1} |T_{\mu}^{j-1}(1-T_{\mu})f|^s\right)^{1/s}.\]
Following similar arguments as in the corollary, it follows
\begin{equation}
\Big( \sum_k \max_{n_{k}<n\le n_{k+1}} n^{s\beta} |T_{\mu}^{n}(I-T_{\mu})^{r} f|^s\Big)^{1/s} \lesssim 
{\bf Q_{\beta s -1,s,r}}f + {\bf Q_{(\beta+1) s-1 ,s,r+1}}f.\label{eqn:summaxs}
\end{equation}

\medskip

%Variation. 2.7
\noindent \textit{ Proof of Proposition \ref{prop:longvar}:}\\
%Variation on special sequence: 
We prove only the result for the convolution measures with $\hat \Delta_{k,r}=n_k^{\beta} ({\hat\mu}^{n_k}  - {\hat\mu}^{n_{k+1}})(1-\hat \mu)^r$ because the other case follows similar arguments.
It suffices to verify that  $\{\Delta_{k,r}\}_k$ satisfy the conditions of Lemma \ref{lem:basic}.

If $\delta\ge 0$,
 \begin{equation*}\sum_k n_k^{s\delta} (n_{k+1}-n_k) |\hat\mu(t)|^{n_k s}\lesssim \frac1{h(t)^{s\delta+1}} . %\label{eq:seqestimate}
 \end{equation*} %eq6

\begin{align}
\Big( \sum_k   |\hat\Delta_{k,r}(t)|^{s}  \Big)^{1/s}
\le  & \Big( \sum_k  n_k^{\beta s}  |\hat\mu(t)|^{sn_k} \, |1-\hat\mu(t)^{(n_{k+1}-n_k)}|^s \Big)^{1/s} |1-\hat\mu(t)|^r \nonumber \\
 \le &  \Big( \sum_k n_k^{\beta s} (n_{k+1}-n_k)^s |\hat\mu(t)|^{s n_k}  \Big)^{1/s}  |1-\hat\mu(t)|^{r+1} \nonumber \\
 \lesssim & \Big( \sum_k n_k^{\beta s+\gamma (s-1)} (n_{k+1}-n_k) |\hat\mu(t)|^{s n_k}  \Big)^{1/s}  |h(t)|^{r+1}   \nonumber \\
 \lesssim & \frac{h(t)^{r+1}}{ h(t)^{\beta+\gamma+(1-\gamma)/s}}. \label{eq:nk} 
\end{align}
Under the assumptions for $\gamma$, $\beta$ and $s$ we have
$\beta+\gamma+(1-\gamma)/s <1+r$.

\noindent Then, since $|h(t)|\lesssim |t| h'(t)$,
\begin{align*}
 \mbox{A} = &
\int_{|t|<1/2} \frac 1{|t|} \Big( \sum_k  |\hat\Delta_{k,r}(t)|^{s}  \Big)^{1/s}  dt
\lesssim & \int_{|t|<1/2}  \frac{h(t)^{r} \, h'(t)}{ h(t)^{\beta+\gamma+(1-\gamma)/s}}  dt =K <\infty,
  \end{align*}
and dominating the terms of the sum by an integral over intervals of length $l^{-2}$,
%##############
% IV
\begin{align*}
\mbox{C} =&  \sum_{l\neq 0} \frac 1{|l|}
\Big(    \sum_k |\hat\Delta_{k,r}(1/|2l| )|^s  \Big)^{1/s} 
\lesssim    \sum_{l\neq 0} \frac {h(1/|2l|)^{1+r-(\gamma+\beta)-(1-\gamma)/s}}{|l|} \lesssim K <\infty. 
\end{align*}
Let $B_k=\hat\mu^{n_k}(t)-  \hat\mu^{n_{k+1}}(t)$, then
\begin{align*}
| B'_k(t) |
\le &
 n_k | \hat \mu^{n_k-1}(t)| \,  
\Big| 1-\hat\mu(t)^{n_{k+1}-n_k} \Big| \,|\hat\mu'(t) | 
 +  (n_{k+1}-n_k)  \Big| \hat\mu(t) \Big|^{n_{k+1}-1}  \, |\hat\mu'(t)|\\
 \lesssim & n_k^{1+\gamma} | \hat \mu^{n_k-1}(t)| \, |\hat\mu'(t)| |1-\hat\mu(t)| 
 + n_k^{\gamma} | \hat \mu^{n_{k+1}-1}(t)| \, |\hat\mu'(t)|,
 \end{align*}
 and
\begin{align*}
 B''_k
  = &  n_k(n_k-1) \hat\mu^{n_k-2}(t) (\hat \mu'(t))^2 - n_{k+1}(n_{k+1}-1) 
         \hat\mu^{n_{k+1}-2}(t)(\hat \mu'(t))^2  \nonumber \\
 & +n_k \hat\mu^{n_k-1}(t) \hat \mu''(t) - n_{k+1} \hat\mu^{n_{k+1}-1}(t) \hat \mu''(t)  \\ %\label{eq:nk2prime}\\ %eq7
= & n_k(n_k-1)\hat\mu^{n_k-2}(t) (1-\hat\mu^{n_{k+1}-n_k}(t)) (\hat \mu'(t))^2 \nonumber\\
& +[n_k(n_k-1)-n_{k+1} (n_{k+1}-1)] \hat\mu^{n_{k+1}-2} (\hat \mu'(t))^2 \nonumber\\
& +  n_k  \hat\mu^{n_k-1}(t) (1-\hat\mu^{n_{k+1}-n_k}(t)) \hat \mu''(t) \nonumber\\
& +(n_k-n_{k+1}) \hat\mu^{n_{k+1}-1}(t) \hat \mu''(t).\nonumber
\end{align*}
Since
\[ \hat\Delta'_{k,r} = n_k^{\beta} \big[ B'_k(t) (1-\hat\mu(t))^r + B_k(t) r (1-\hat\mu(t))^{r-1} \hat\mu'(t)\big],\]
 \begin{equation} \Big( \sum_k  |\hat\Delta'_{k,r}|^s \Big)^{1/s}
 \lesssim  \frac {h^r(t) |\hat\mu'(t)| }{h(t)^{(\gamma+\beta)+(1-\gamma)/s }}. \label{eq:nkprime} %eq9
 \end{equation}
Estimating
\begin{align*}
|n_k(n_k-1)-n_{k+1}(n_{k+1}-1)| 
\lesssim  n_k^{\gamma}(n_{k+1}+n_k+1)  
\lesssim n_k^{1+\gamma},
\end{align*}
we have
\begin{align*}
|n^{\beta} B_k''(t)&(1-\hat\mu(t))^r| \lesssim  \Big[   n_k^{2+\gamma+\beta}  \hat\mu^{n_k-2}(t) h(t)^2 
     + n_k^{1+\gamma+\beta} |\hat\mu(t)|^{n_{k+1}-1}  h(t)  \Big. \\
 & \Big. + n_{k}^{1+\gamma+\beta} |\hat\mu(t)|^{n_{k+1}-2}  h(t) 
 +n_{k}^{\gamma+\beta} |\hat\mu(t)|^{n_{k+1}-1}  \Big] \frac{h(t)^r |\hat\mu'(t)|}{|t|}.
\end{align*}
  Since 
  \begin{align*}  
  |\hat\Delta''_{k,r}| \sim & \quad n_k^{\beta} \Big[   |B''_k(t)| |1-\hat\mu(t)|^r + |B'_k(t)| |1-\hat\mu(t)|^{r-1} |\hat\mu(t)| \\
  & +
  |B_k(t)|  |1-\hat\mu(t)|^{r-2} |\hat\mu'(t)|^2 + |B_k(t)|  |1-\hat\mu(t)|^{r-1} |\hat\mu''(t)|
 \Big],
  \end{align*}
  we have
\begin{align}
\Big(   \sum_k   |\hat\Delta''_{k,r}(t)|^s    \Big)^{1/s} 
  \lesssim    \frac{h(t)^r}{h(t)^{(\gamma+\beta)+(1-\gamma)/s } }   \frac{|\hat\mu'(t)|} {|t|}, \label{eq:nk2primev2}%eq8
\end{align}
and
\begin{equation*}
\mbox{B} =  \int_{|t|<1/2} |t|\,  \Big( \sum_k  |\hat\Delta''_{k,r}(t)|^s  \Big)^{1/s} \, dt 
  \lesssim   \int_{0<t<1/2} 
\frac {h(t)^r \ h'(t)}{h(t)^{(\gamma +\beta)+(1-\gamma)/s }}\, dt<\infty.
 %\label{eq:II4} %eq11
\end{equation*}
%
%
% V
For the last term, from equation (\ref{eq:nkprime}) and $|t\hat\mu'(t)|\lesssim h(t)$,  we have
\[
\mbox{D} \lesssim        \sum_{l\neq0} \frac 1{l^2} \frac {h(1/|2l|)^r |\hat \mu'(1/|2l|)|}{h(1/|2l|)^{(\gamma+\beta)+(1-\gamma)/s }} 
 \lesssim \sum_{l\neq 0}  \frac {h(1/|2l|)^{r +1 -(\gamma+\beta)-(1-\gamma)/s}}{|l|} \lesssim K
<\infty. 
\]

\hfill$\square$

\bigskip
%%%%%%%%%%%%%%%%%%%%%%%
\noindent \textit{ Proof of Proposition \ref{prop:genvar}:} \\ %2.8
Let $\Delta_{n,r}f=T_{\mu}^n(I-T_{\mu})^r f$, and 
$\{n_k\}$ any increasing sequence. 
By Abel summation,
\[\sum_{u=n}^m u^{\beta} T_{\mu}^u(I-T_{\mu})f=\sum_{u=n+1}^{m+1} (u^{\beta} - (u-1)^{\beta})T_{\mu}^uf + n^{\beta}T_{\mu}^n -(m+1)^{\beta} T_{\mu}^{m+1}f.\]
Hence 
\[\left| n_k^{\beta}T_{\mu}^{n_k}f -n_{k+1}^{\beta} T_{\mu}^{n_{k+1}}f \right| \lesssim  \left|   \sum_{u=n_k}^{n_{k+1}-1} u^{\beta} T_{\mu}^u(I-T_{\mu})f \right| +
\left| \sum_{u=n_k+1}^{n_{k+1}} (u^{\beta} - (u-1)^{\beta})T_{\mu}^uf  \right|.\]
Applied to the function $(I-T_{\mu})^r f$ we obtain
\begin{align}
|D_{k,\beta}f| = &  |n_k^{\beta} \Delta_{n_k,r} f - n_{k+1}^{\beta} \Delta_{n_{k+1},r}f | 
\lesssim   \sum_{u=n_k+1}^{n_{k+1}} u^{\beta-1} | \Delta_{u,r}f| +  \sum_{u=n_k}^{n_{k+1}-1}  u^{\beta} |\Delta_{u,r+1}f|. \label{eq:abel}
\end{align}
Since $s\ge 1$,
\begin{align*}
\Big( \sum_k |D_{k,\beta}f|^s \Big)^{1/s} \le  \sum_k |D_{k,\beta}f| \lesssim &  \sum_u  u^{\beta-1 } |\Delta_{u,r}f|  + \sum_u u^{\beta}  |\Delta_{u,r+1}f|.
\end{align*}
Then
\[ \lVert n^{\beta}\Delta_{n,r}f \rVert_{v(s)}  \lesssim {\bf Q_{\beta-1 ,1,r}}f + {\bf Q_{\beta ,1,r+1}}f,\]
which, by Theorem \ref{thm:main}, are bounded on $Y$ because $\beta<r$.

Since  $ \lVert n^{\beta} \Delta_{n,r}f \rVert_{o(s)}\le   \lVert n^{\beta} \Delta_{n,r}f \rVert_{v(s)}$ the result also holds for oscillation norms.
~\hfill$\square$
\medskip

\noindent \textit{ Proof of Theorem \ref{thm:RittResult}.}\\
By Theorem 1.3. in \cite{Dunn}, there exists  a power bounded %Kreiss 
operator $R$ and $a\in (0,1)$ 
such that 
$T=I-(I-R)^{a} = R_{\nu_{a}}$, where $\nu_{a}$ is the probability measure on $\mathbb Z$ defined in equation (\ref{eqn:valpha}).  
Therefore Theorem \ref{thm:main}, Corollary \ref{cor:sup}, Propositions \ref{prop:longvar}  and \ref{prop:genvar} apply to $T$.
\hfill$\square$
%\end{proof}

%%%%%%%%%%%%%%%%%%%%%%%%%%%%%%%%%%%%%%%%%%

\section{The case of  $\tau_{\mu}$}% induced by measure preserving transformations} % SECTION 5
We now turn to the special case of the operators $\tau_{\mu}$
induced by a measure preserving transformation on a probability space.

By the ergodic decomposition of probability measures,
 it suffices to work with ergodic systems.
 
  \begin{lemma}\label{lem:ergo_decom} %5.1
Let $X=(X,\mathcal B,m)$ be a standard probability space and $\tau:X\to X$ a measure preserving transformation. 
Let $U_{\tau}$ be a sublinear operator defined on measurable functions on $X$.
Suppose that there exists a constant $C>0$ such that for any $\tau$--invariant ergodic probability measure $\nu$ on $(X, \mathcal B)$,
$\nu\left(|U_{\tau}f|>\lambda \right) \le \frac C{\lambda} \int |f| d\nu$.
Then %the inequality also holds for any $\tau$--invariant probability measure $m$.
\[m\left(|U_{\tau}f|>\lambda \right) \le \frac C{\lambda} \int |f| dm.\]
\end{lemma}

\begin{proof}
There are several versions of the ergodic decomposition of measures \cite{Walters, CSF}. 
We use the formulation based on conditional expectation on a standard probability space \cite{Sarig}. (The argument works for any other formulation.)
%Let $X = (X, \mathcal B, \mu)$ be a standard probability space, $\tau$ a measure preserving transformation. 

Let $Inv(\tau)=\{E \in \mathcal B : E = \tau^{-1}E\}$. Let $m_y$ be the conditional probabilities with respect to $Inv(\tau)$. (Refer to \cite{Sarig} to see how the $\{m_y\}$ are constructed.)
Then, for $m$-a.e. $y\in X$, $m_y$ is $\tau$--invariant and ergodic and 
$\int_X f dm= E(E(f|Inv(\tau))=  \int_X \int_X f dm_y dm(y)$.
Then
\begin{align*}
m(|U_{\tau}f|>\lambda) 
%= & \int_X \int_X 1_{[|Uf|>\lambda]}  dm_y dm(y) \\
 = & \int_X m_y\left(|U_{\tau}f|>\lambda \right) dm(y) 
\le  \int_X \frac C{\lambda} \int_X |f| dm_y \ dm(y)=\frac C{\lambda} \int_X |f|\ dm. 
\end{align*}
\end{proof}

For operators $\tau_{\mu}$, the estimates developed in Sections 4 can be improved with the aid of the Ergodic Calder\'on--Zygmund  decomposition introduced by R. Jones \cite{Roger}, because  the cancellation provided by the "bad" function gives improved control on Fourier coefficients.

%5.2
\begin{prop}\label{prop:Roger_CZ}  \cite{Roger} Ergodic CZ--Decomposition.\\ 
Let $(X, m)$ be a probability space and $\tau : X \to X$ and ergodic, invertible measure preserving transformation. 
Let $f\in L^1(X)$ and\\ $f^*(x)= \sup_{n,m} \frac 1{n+m+1} |\sum_{k=-m}^n f(\tau^kx)|$, the maximal function for averages. Then $f$ can be decomposed as $f=g+b$,  with $g\in L^{\infty}$ and $b\in L^1(X)$, called the "bad" function,  with the following properties. 
\begin{enumerate}
\item[a)]
The set $B=\{x:  f^*(x) \le \lambda\}$ decomposes as $B= \cup_i B_i$ where the $B_i$'s are pairwise disjoint.
\item[b)] $b=\sum_i b_i$ where each $b_i$ is supported on a set $E_i$ of the form $E_i=\cup_{j=1}^{i} \tau^jB_i$, union of pairwise disjoint sets.
\item[c)] $\sum_{k=1}^{i}b_i(\tau^kx)=0$ if $x\in B_i$.
\item[d)] $\sum_{k=1}^{i}|b_i(\tau^kx)| \le 2 \lambda$ if $x\in B_i$.
\item[e)] $\sum_i m(E_i) \le 2 \frac{\lVert f\rVert_1}{\lambda}$.
\item[f)] $\lVert g\rVert_{\infty} \le 2\lambda$ and $\lVert g\lVert_1\le \lVert f\rVert_1$. 
\end{enumerate}
\end{prop}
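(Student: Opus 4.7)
The plan is to construct the decomposition via a stopping-time argument along orbits, following Jones \cite{Roger}. First I would appeal to the weak type $(1,1)$ bound for the two-sided ergodic maximal function to obtain $m(\{f^* > \lambda\}) \lesssim \|f\|_1/\lambda$; this will yield property (e) once the $E_i$'s are constructed. Set $B = \{x : f^* (x) \le \lambda\}$, the ``good'' set, and define $g = f$ on $B$.

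On the complement $B^c = \{f^* > \lambda\}$, for each $x$ I would select the minimal orbit interval $[-m(x), n(x)]$ for which the average $\tfrac 1{n+m+1}|\sum_{k=-m}^n f(\tau^k x)| > \lambda$. Using a Vitali-type selection along orbits, I would then extract a maximal countable family of pairwise disjoint orbit-segments, labeling them in a way that matches the structure $E_i = \bigcup_{j=1}^{l_i} \tau^j B_i$ required in the statement; the sets $B_i$ are the ``base'' pieces, chosen so that $\{\tau^j B_i\}_{j=1}^{l_i}$ are disjoint (which follows from the minimality of the stopping interval and the measure-preservation of $\tau$). This decomposition exhausts $B^c$ up to a null set.

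Next, on each segment $E_i$ I would set $g$ equal to the orbit-average of $f$ along that segment (so $g$ is constant on each $\tau$-orbit piece in $E_i$) and define $b_i = (f - g)\mathbf{1}_{E_i}$, with $b = \sum_i b_i$. The averaging guarantees $\sum_{k=1}^{l_i} b_i(\tau^k x) = 0$ on $B_i$, giving (c). The $L^\infty$ bound $\|g\|_\infty \le 2\lambda$ in (f) comes from the minimality of the stopping interval: the average on the strictly shorter interval is $\le \lambda$, so the average on the full segment differs by at most $\lambda$ from it; together with $\|g\|_1 \le \|f\|_1$ (immediate from averaging), this yields (f). Property (d) follows by combining the triangle inequality with the bound $|g|\le 2\lambda$ and the cancellation $\sum_k b_i(\tau^k x) = 0$, which together control $\sum_k |b_i(\tau^k x)|$ by $2\lambda$.

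The main obstacle is the Vitali-type selection in the ergodic setting: one must produce an at most countable family of pairwise disjoint orbit-segments that cover $B^c$ up to measure zero, while arranging that within each selected segment the translates $\tau^j B_i$ of the base set are themselves pairwise disjoint. This is the combinatorial heart of Jones's construction in \cite{Roger}, and reproducing it carefully is the step that requires real work; the rest of the verification (a)--(f) is then a bookkeeping consequence of the stopping-time and the averaging definition of $g$. Since the statement is taken verbatim from \cite{Roger}, the clean exposition is to cite Jones for the detailed construction.
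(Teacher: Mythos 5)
The paper does not prove Proposition \ref{prop:Roger_CZ}; it is quoted as a black box from Jones \cite{Roger}, exactly as you observe in your closing sentence, so there is no proof in the paper to compare against. Your sketch correctly identifies the general shape of the argument (stopping times along orbits, base sets $B_i\subset B$ with return-time towers $E_i=\bigcup_{j=1}^{l_i}\tau^jB_i$, $g$ defined by averaging along orbit segments and $b=f-g$), and the parts concerning (a), (b), (c), (e), and (f) are plausible modulo the bookkeeping you defer to \cite{Roger}.

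However, there is a concrete gap in your treatment of (d). The statement asserts $\sum_{k=1}^{l_i}|b_i(\tau^kx)|\le 2\lambda$ for $x\in B_i$, a bound that is \emph{independent of the tower height} $l_i$. Your claimed derivation (triangle inequality, $\|g\|_\infty\le 2\lambda$, and $\sum_k b_i(\tau^k x)=0$) does not yield this: cancellation only says $\sum_k b_i(\tau^kx)=0$, hence $\sum_k |b_i(\tau^kx)|=2\sum_k b_i^+(\tau^k x)$, and the maximal-function hypothesis $f^*(x)\le\lambda$ controls the \emph{signed} partial sums $\bigl|\sum_{k=0}^n f(\tau^k x)\bigr|\le\lambda(n+1)$, not $\sum_k|f(\tau^kx)|$. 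From an orbit-averaging construction one only obtains $\sum_{k=1}^{l_i}|b_i(\tau^kx)|\lesssim\lambda\, l_i$. The $l_i$-free version of (d) is precisely what is used in the proof of Theorem \ref{thm:ergodic_key} (the factor $l_i$ is already consumed by the bound $|l|\le l_i$ in the estimates of $I_i$ and $II_{i,1}$, and a bound $\lesssim\lambda l_i$ in (d) would produce an uncontrolled $\sum_i l_i\, m(E_i)$); so (d) cannot be weakened. Reproducing (d) requires the finer structure of Jones's construction, which your sketch does not reach. Given that the paper simply cites \cite{Roger}, the honest conclusion is that this property must be taken from there, not re-derived by the naive averaging scheme.
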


This decomposition reduces the problem on $L^1$ to working with simpler building blocks called ‘atoms,’ which are defined below.
%5.3
\begin{defn} \label{def:atom} Let $\tau:X\to X$ be an invertible, measure preserving  transformation on a standard probability space $(X,\mathcal B,m)$. Given $\lambda>0$, 
a {\bf $\lambda$--atom} is a function $a\in L^1(X)$ for which there exist a measurable set $B\in X$ and an integer $d>0$ such that
\begin{enumerate}
\item  $\{\tau^k B\}_{k=1}^n$ are pairwise disjoint sets, 
\item $a$ is supported on the set $E=\cup_{k=1}^d \tau^k B$,
\item $\sum_{k=1}^d a(\tau^k x) =0$ for all $x\in B$,
\item $\sum_{k=1}^d |a(\tau^k x)|\le 2\lambda$ if $x\in B$.
\end{enumerate}
We call $B$ the generator set (of length $d$) and define  
\[E^*=E\cup \tau^{d}E \cup \tau^{2d}E  \cup \tau^{-d} E \cup \tau^{-2d} E.\]
\end{defn}
Notice that, as a consequence of the properties of atoms,
\begin{equation}\label{eq:def}
\|a\|_1 =  \int_E |a| \, dm =  \sum_{k=1}^d \int_{\tau^k B} |a| \, dm 
=  \sum_{k=1}^d \int_{B} |a\circ \tau^k| \, dm \le 2\lambda \, m(B).\end{equation}

%Note that the bad function ergodic--CZ decomposition is a sum of atoms. 
%5.4
\begin{prop} \label{prop:atoms}
Let  $X$ and $\tau$ as in Definition \ref{def:atom}. 
Suppose $Q$ is a sublinear operator defined on measurable functions on $X$ satisfying
\begin{enumerate}
\item $Q$ is bounded on $L^2(X)$,
\item there exists a constant $C>0$ such that, for every $\lambda$--atom $a$,
\[\int_{(E^*)^c} |Q(a)| dm \le C \, \lambda m(E),\] where %$B$ is a generator set for $a$ of length $d$, and
   $E$ and $E^*$ are defined as in Definition \ref{def:atom}.
%E=\cup_{k=1}^d \tau^k B$. 
%and  $E^*=E\cup \tau^{d}E \cup \tau^{2d}E  \cup \tau^{-d} E \cup \tau^{-2d} E$.
\end{enumerate}
Then $Q$ is of weak type (1,1). 
\end{prop}

\begin{proof}
By Lemma \ref{lem:ergo_decom}, it suffices to condider the case when $(X, \mathcal B, m,\tau)$ is an ergodic measure preserving system.

Given $\lambda>0$ and $f\in L^1$, decompose $f$  via the ergodic--CZ decomposition, with the sets $B,B_i, E_i$ 
and the associated good and bad functions $g$ and $b$ defined as in Proposition \ref{prop:Roger_CZ}.

Since $Q$ is sublinear, we have
\[
m(x:|Q f(x)|>2\lambda)\le  m(x:|Q g(x)|>\lambda) + m(x:|Q b(x)|>\lambda).\]
By the  $L^2$--boundedness of $Q$,
 $ m(x:|Q g(x)|>\lambda) \lesssim \frac{\lVert g\rVert_2^2}{\lambda^2} \le  \frac{2\lVert g\rVert_1}{\lambda}  
\le  \frac{2\lVert f\rVert_1}{\lambda}.
$ 

Let $E_i^*=E_i\cup \tau^{i}E_i \cup \tau^{2 i}E_i  \cup \tau^{-i} E_i \cup \tau^{-2i} E_i$.
By property e) if the ergodic-CZ decomposition,
\begin{align*}
m(x:|Q b(x)|> \lambda)\le  &
m(\cup_i E^*_i)+
m(x\in (\cup_i E^*_i)^{c}: |Q b(x)|>\lambda)\\
\le & 5 \sum_i m(E_i) + \frac 1{\lambda} \int_{(\cup_i E^*_i)^{*c}} |Q b|\, dm \\
\le & 10 \frac{\lVert f\rVert_1}{\lambda} +  \frac 1{\lambda} \sum_i \int_{(E^*_i)^{*c}} |Q b_i|\, dm. 
\end{align*}
Noting that each $b_i$ is a $\lambda$--atom, apply assumption b) %of the proposition for atoms   
to obtain
\[ \int_{(\cup_i E^*)^c} |Q b(x)| dm \le \sum_i  \int_{E_i^{*c}} |Q b_i(x)| dm \le C \lambda \sum_i  m(E_i) \le 2 C \|f\|_1,\]
completing the proof.
\end{proof}

\begin{thm}\label{thm:ergodic_key} %5.5
Let $\tau:X\to X$ be an invertible, measure preserving  transformation on a standard probability space $(X,\mathcal B,m)$. 
Let $s\ge 1$, $\{n_k\}$ an increasing sequence of positive integers, $J_k=[n_k,n_{k+1})$, 
and $\{\Delta_n\}$ a collection of finite, signed measures on $\mathbb Z$ such that $\{\hat \Delta_n\}$ are twice continuously differentiable on $|t|\in(0,1)$. Suppose $\{\Delta_n\}$ 
satisfy the following conditions:\\
$\int_{|t|<.5} \big( \sum_k  \max_{n\in J_k} |\hat \Delta_n(t)|^s \big)^{1/s} \, dt<\infty$, \\
$\sum_{|j|>1} \frac 1{j^2}  \big( \sum_k \max_{n\in J_k} |\hat \Delta_n(1/|j|)|^s \big)^{1/s}<\infty$,\\
$\int_{|t|<.5} \big( \sum_k \max_{n\in J_k} |\hat \Delta''_n(t)|^s \big)^{1/s}|t|^2 \, dt<\infty$, \\
and $\sum_{|j|>1} \frac 1{|j|^3}  \big( \sum_k \max_{n\in J_k} |\hat \Delta'_n(1/|j|)|^s \big)^{1/s}<\infty$.  \\
 Let \[Qf=\Big(\sum_k\max_{n\in J_k} |\tau_{\Delta_n} f |^s \Big)^{1/s}.\] Then
\begin{enumerate}
\item for every $\lambda$--atom $a$ ($\lambda>0$), 
\begin{equation*} \int_{(E^*)^c} Qa\ dm \le C \, \lambda m(E), %\label{eq:atom}
\end{equation*}
where %$B$ is a generator set for $a$ of length $d$ and 
$E$ and $E^*$ are as in Definition \ref{def:atom}; 
\item if  $Qf$ is bounded on $L^2$
then $Qf$ is weak (1,1).
 \end{enumerate}
\end{thm}

\begin{proof}
Let $a$ be a $\lambda$--atom and let $B$, $d$, $E$ and $E^*$ as in Definition \ref{def:atom}.
Since $a$ is supported on $E$, and using the cancellation property c) of  Definition \ref{def:atom},
\begin{align*}\tau_{\Delta_n} a(x)=\sum_{j} \Delta_n(j) a(\tau^j x) = &
\sum_{j: \tau^jx\in B} \sum_{l=1}^{d} (\Delta_n(j+l) - \Delta_n(j)) a(\tau^{j+l} x).
\end{align*}
Then
\begin{align*}
(Q a(x))^s  =& 
\sum_{k=1}^{\infty} \max_{n\in J_k} \Big|
\sum_{j: \tau^jx\in B} \sum_{l=1}^{d} (\Delta_n(j+l) - \Delta_n(j)) a(\tau^{j+l} x)\Big|^s\\
=  & \sum_{k=1}^{\infty} \max_{n\in J_k} \left| \sum_{j: \tau^jx\in B} \sum_{l=1}^{d}a(\tau^{j+l}x) 
\int_{|t|<.5} 
\hat\Delta_n(t) (e((j+l)t)- e(jt) ) dt  \right|^s. 
\end{align*}
Let $I(x)$ and $II(x)$ be the expresions obtained by replacing the integral over $\{|t|<.5\}$ with integrals over  $\{|t|<1/|j|\}$ and $\{1/|j|<|t|<.5\}$ respectively. 
\begin{align*}
I(x)= & \left(\sum_{k=1}^{\infty} \max_{n\in J_k}
\left| \sum_{j: \tau^jx\in B} \sum_{l=1}^{d}a(\tau^{j+l}x)
\int_{|t|<1/|j|} 
\hat\Delta_n(t) (e((j+l)t)- e(jt) ) dt  \right|^s \right)^{1/s} \\
II(x)= & \left(\sum_{k=1}^{\infty} \max_{n\in J_k}
\left| \sum_{j: \tau^jx\in B} \sum_{l=1}^{d}a(\tau^{j+l}x)
\int_{1/|j|<|t|<.5} 
\hat\Delta_n(t) (e((j+l)t)- e(jt) ) dt  \right|^s \right)^{1/s}.
\end{align*}
By the triangular inequality of the $\ell^s$--norm,
\[Q a(x)\le  I(x)+II(x).\]

From the construction of $E^*$, if $x\in (E^*)^c$ % \subset B^c$ 
 and $\tau^jx\in B$, we must have $|j|\ge 2d>1$.

Estimate for $I(x)$: 
\begin{align*} I(x)= &
\left(\sum_{k=1}^{\infty} \max_{n\in J_k}
\left| \sum_{j: \tau^jx\in B} \sum_{l=1}^{d}a(\tau^{j+l}x)
\int_{|t|<1/|j|} 
\hat\Delta_n(t) (e((j+l)t)- e(jt) ) dt  \right|^s \right)^{1/s}\\
\le  & 
\sum_{j: \tau^jx\in B} \sum_{l=1}^{d} |a(\tau^{j+l}x)| \int_{|t|<1/|j|}  
\left(\sum_{k=1}^{\infty}  \max_{n\in J_k} |\hat\Delta_n(t)|^s \right)^{1/s}  
|e((j+l)t)- e(jt) | dt  \\
\le  & 
\sum_{j: \tau^jx\in B} \sum_{l=1}^{d} |l| \ |a(\tau^{j+l}x)| \int_{|t|<1/|j|}  
\left(\sum_{k=1}^{\infty} \max_{n\in J_k}  |\hat\Delta_n(t)|^s \right)^{1/s}   |t| \ dt  \\
\le  & \  2\lambda d
\int_{|t|<.5}  
\left(\sum_{k=1}^{\infty} \max_{n\in J_k}  |\hat\Delta_n(t)|^s \right)^{1/s}   |t| \sum_{|j|>1} 1_{[x:x\in \tau^{-j}B]}(x) 1_{[j:|j|<1/|t|]}(j) \ dt . 
\end{align*}
Then, %for $(1+\alpha)\le sm$,
\begin{align*}  \int_{(E^*)^c}  I(x) dm 
\le & \ 2\lambda d
\int_{|t|<.5}    
\left(\sum_{k=1}^{\infty}  \max_{n\in J_k} |\hat\Delta_n(t)|^s \right)^{1/s}  |t| \sum_{1<|j|<1/|t|}  m((E^*)^c\cap \tau^{-j}B)  \\
\le & \ 4\lambda d \ m(B)
\int_{|t|<.5}  
\left(\sum_{k=1}^{\infty} \max_{n\in J_k}  |\hat\Delta_n(t)|^s \right)^{1/s}    \ dt   
\le  \ C \lambda m(E).
\end{align*}

Handling $II(x)$ requires integration by parts. We will need the following estimates.
Because $\Delta_n(t),\Delta'_n(t),e(kt)$ are 1-periodic, the same procedure as (\ref{eq:1stderivative})
 yields, for $|j|>2|l|$,
\begin{align*}
| \int_{|t|>1/|j|} &
\hat\Delta_n(t) (e((j+l)t)- e(jt) ) dt | \lesssim  \\
\lesssim &  \left| \int_{|t|>1/|j|} \hat\Delta'_n(t) (\frac{e((j+l)t)}{j+l}- \frac{e(jt)}{j} ) dt \right|
 + \left| \hat\Delta_n(1/|j|) \right| \frac{|l|}{j^2}.
 \end{align*}
 From \cite{BC}, since
$2|l|<2d<|j|$,  we have
\[|\frac{e((j+l)t)-1}{(j+l)^2}- \frac{e(jt)-1}{j^2} | \lesssim \frac{|l||t|}{|j|^2}. \]
Thus
\begin{align*}
\int_{|t|>1/|j|} &
\hat\Delta'_n(t) (\frac{e((j+l)t)}{(j+l)}- \frac{e(jt)}{j} ) dt\\
\lesssim &  \left| \int_{|t|>1/|j|} \hat\Delta''_n(t) (\frac{e((j+l)t)-1}{(j+l)^2}- \frac{e(jt)-1}{j^2} ) \ dt \right|\\
& + \left| \hat\Delta'_n(1/|j|) \right| \frac{|l|}{|j|^3} \\
\le &   \int_{|t|>1/|j|} |\hat\Delta''_n(t)| |t| \frac{|l|}{|j|^2} \ dt+  
\left| \hat\Delta'_n(1/|j|) \right| \frac{|l|}{|j|^3}.
\end{align*}

By Stein--Minkowski integral inequality and property d) of $\lambda$--atoms,
\begin{align*} 
II_{1}(x) = &  \left(\sum_{k=1}^{\infty} \max_{n\in J_k} \left| \sum_{j: \tau^jx\in B} \sum_{l=1}^{d}a(\tau^{j+l}x)
\int_{|t|>1/|j|} 
\hat\Delta_n(t) (e((j+l)t)- e(jt) ) dt  \right|^s \right)^{1/s} \\
\lesssim &
\left(\sum_{k=1}^{\infty} \max_{n\in J_k}  \left| \sum_{j: \tau^jx\in B} \sum_{l=1}^{d}  a(\tau^{j+l} x) 
\int_{1/|j|<|t|<.5} |\hat\Delta''_n(t)|  \frac{l |t|}{|j|^2} \ dt \right|^s\right)^{1/s} 
 \end{align*}
\begin{align*} 
\le &  \ d \sum_{j: \tau^jx\in B} \sum_{l=1}^{d}  |a(\tau^{j+l} x)|  \int_{1/|j|<|t|<.5} \left(\sum_{k=1}^{\infty}  \max_{n\in J_k} |\hat\Delta''_n(t)|^s\right)^{1/s} \frac{|t|}{|j|^2} \ dt\\
 \le & \ 2\lambda d \int_{|t|<.5} \left(\sum_{k=1}^{\infty} \max_{n\in J_k} |\hat\Delta''_n(t)|^s\right)^{1/s} |t|
 \sum_{|j|>1/|t|} 1_{[\tau^jx\in B]}(x) \frac{1}{|j|^2} \ dt.\\
 \end{align*}
 Integrating over $(E^*)^c$,
\begin{align*}  
\int_{(E^*)^c} II(x) dm   
  \le & \ 2\lambda d \int_{|t|<.5} \left(\sum_{k=1}^{\infty} \max_{n\in J_k} |\hat\Delta''_n(t)|^s\right)^{1/s} |t| \sum_{|j|>1/|t|} \frac{1}{|j|^2}
  m(B)\\
\lesssim & \  \lambda m(E) \int_{|t|<.5} \left(\sum_{k=1}^{\infty} \max_{n\in J_k} |\hat\Delta''_n(t)|^s\right)^{1/s} |t|^2 \ dt= C \lambda m(E).
\end{align*}
Lastly, we need to handle two remaining pieces.
\begin{align*} 
II_{2}(x) = & \int_{(E^*)^c} \left(\sum_k \max_{n\in J_k} \left| \sum_{j: \tau^jx\in B} \sum_{l=1}^{d} |a(\tau^{j+l} x)| 
 \left| \hat\Delta_n(1/|j|) \right| \frac{|l|}{|j+l|^2} \right|^s\right)^{1/s} dm\\
 \lesssim & \int_{(E^*)^c }   d \sum_{j: \tau^jx\in B} \sum_{l=1}^{d} |a(\tau^{j+l} x)| 
 \left(\sum_k \max_{n\in J_k} \left| \hat\Delta_n(1/|j|) \right|^s \right)^{1/s} \frac{1}{j^2} dm\\
  \lesssim & \  \lambda  m(E)  \sum_{|j|>1} 
 \frac 1{j^2}  \left(\sum_k  \max_{n\in J_k}  \left| \hat\Delta_n(1/|j|) \right|^s \right)^{1/s}
 \le  C \lambda m(E);
 \end{align*}
 and
\begin{align*} 
II_{3}(x) = & \int_{(E^*)^c } \left(\sum_k \max_{n\in J_k} \left| \sum_{j: \tau^jx\in B} \sum_{l=1}^{d} |a(\tau^{j+l} x)| 
 \left| \hat\Delta'_n(1/|j|) \right| \frac{|l|}{|j+l|^3} \right|^s\right)^{1/s} dm \\
  \lesssim & \int_{(E^*)^c }   d \sum_{j: \tau^jx\in B} \sum_{l=1}^{d} |a(\tau^{j+l} x)| 
 \left(\sum_k \max_{n\in J_k} 
 \left| \hat\Delta'_n(1/|j|) \right|^s \right)^{1/s} \frac{1}{|j|^3} dm\\
  \lesssim & \ \lambda m(B) d \sum_{|j|>1} \left(\sum_k \max_{n\in J_k} 
 \left| \hat\Delta'_n(1/|j|) \right|^s \right)^{1/s}
 \frac 1{|j|^3} 
   \le   
     C \lambda m(E).
 \end{align*}

Combining the estimates for $I(x),II_{1}(x), II_{2}(x)$ and $II_{3}(x)$, we obtain the desired result
\begin{align*} 
\int_{(\cup_i E^*)^c } Qa(x)\ dm
\le  \ C  \lambda \ m(E). % \lesssim  2 \lVert f\rVert_1.
\end{align*}

\end{proof}

%%%%%%%%%%%%%%%%%%%%%%%%%%%%%%%%%
\medskip
\noindent {\it Proof of Theorem \ref{thm:RittResult.mp}  }\\
%By the ergodic decomposition, it suffices to prove the theorem for the case when $\tau$ is ergodic.

\noindent {\it Proof of Part A:}\\
Assuming that ${\bf Q_{{\alpha,s,r}}}$ is bounded on $L^2$,
 we simply need to check the requirements in Theorem \ref{thm:ergodic_key} for $\hat \Delta_n=n^{\alpha/s} \hat\mu^n(1-\hat\mu)^{r}$ and the sequence $n_k=k$, $r\ge 0$.

First  note that, under the assumptions for $h$,   if $(1+\alpha)\le sr$,
$\int_{|t|<.5}  
h^{r-(1+\alpha)/s}(t) \, dt<\infty.$
Using estimates for $\Delta_n(t)$ in equation (\ref{eq:A}),  for $(1+\alpha)\le sr$ we have
\[\int_{|t|<.5}   \left(\sum_{n=1}^{\infty}   |\hat\Delta_n(t)|^s \right)^{1/s}  dt\le 
\int_{|t|<.5}  h^{r-(1+\alpha)/s}(t)dt <\infty,\]
and
\begin{align*}
\sum_{|j|>1}  \frac{1}{j^2}
\left(\sum_n  
 \left| \hat\Delta_n(1/|j|) \right|^s \right)^{1/s} 
\le &   \sum_{|j|>1}
 \frac 1{j^2} h(1/j)^{r-(1+\alpha)/s}
 \lesssim  \int_0^1 h(t)^{r-(\alpha+1)/s}dt <\infty.
 \end{align*}
 From property $\text{BA}_1$ (ii) and equation (\ref{eqn:C}),
\begin{align*}
\sum_{j>1} \frac{1}{|j|^3} \left(\sum_n 
 \left| \hat\Delta'_n(1/|j|) \right|^s \right)^{1/s}  \le &
 %\sum_{j>2l_i}
 %\frac 1{|j|^3} h(1/j)^{r-1-(\alpha+1)/s} |\hat\mu'(1/j)| \\ % M1, (ii) 
 %\le &
 \sum_{j>1}
 \frac 1{j^2} h(1/j)^{r-(\alpha+1)/s} <\infty.
 \end{align*}
By the estimates  for $|\hat\Delta''_n(t)|$ in equation (\ref{eq:B}) and $\text{BA}_1$ (ii),
\begin{align*}
\int_{|t|<.5} \left(\sum_{n=1}^{\infty} |\hat\Delta''_n(t)|^s\right)^{1/s} |t|^2 dt \le &
\int_{|t|<.5}  h(t)^{(r-1)-(\alpha+1)/s}|\hat\mu'(t)| |t| dt \\
\le & \int_{|t|<.5}  h(t)^{r-(\alpha+1)/s}dt <\infty .
\end{align*}

\medskip

\noindent {\it Proof of Part B: }\\
 Since  $\lVert \{n^{\beta}\tau_{\mu}^n(I-\tau_{\mu})^r f\} \rVert_{v(s)}$ is bounded on $L^2$,  
 by Proposition \ref{prop:atoms} it suffices to check the conditions of 
  Theorem \ref{thm:ergodic_key} for the corresponding sublinear operator $Q$ defined by $\hat \Delta_n=n^{\beta}({\hat \mu}^n(1-\hat \mu)^r)$. 
 %, it suffices to prove equation (\ref{eq:atom}) for atoms and the appropriate operator $Q$. Given $\lambda>0$, let $a$ be a $\lambda$--atom with $B$, $d$, $E$ as in Definition \ref{def:atom}.

%  In between steps:\\
Because $s>1$ we can choose  $0<\delta<\min(1,(s-1)/(s+1))$. Starting with the sequence $\{2^k\}$, enlarge  it by adding $N_k=2^{k(1-\delta)}$ equally spaced points in between, 
call these $\{r_{k,j}\}_{j=0,N_k-1}$. 
These points have the properties: $r_{k,0}=2^k$ and $r_{k,j+1}-r_{k,j}\sim 2^k/N_k=2^{\delta k}\sim r_{k,j}^{\delta}$.  
Denote the full collection of these as $\{r_j\}$ with the indices rearranged appropriately. 

 %%%%%%%%%%%%%%%%%%%%%%%%%%%%%%%%%%%%%%%%%%%

Let  $B_{n,r}=n^{\beta} \tau_{\mu}^{n}(I-\tau_{\mu})^r$ and 
  $K_n=B_{r_j,r}$ for $r_j\le n<r_{j+1}$. 
  Also let  $I_j=[r_j,r_{j+1}]$.
\[ \lVert \{ B_{n,r}  a\} \rVert_{v(s)} \\
   \le 
 \lVert \{ B_{n,r} -K_n)   a\} \rVert_{v(s)}+\lVert \{K_n a\} \rVert_{v(s)} \]

 By properties of variational norms (\cite{JKRW,JR,JSW}), %\begin{align*}
\begin{align*} 
\lVert \{ (B_{n,r} -K_n)   a\} \rVert_{v(s)}  
\lesssim  &  \Big( \sum_j   \lVert\{ (B_{n,r} -B_{r_j,r})   a: n\in I_j \} \rVert_{v(s)}^s  \Big)^{1/s}.    
\end{align*}

Using Abel summation as in equation (\ref{eq:abel}), bound the last term by
\begin{align*} 
 \lVert & \{ (B_{n,r} -B_{r_j,r})  a : n\in I_j\} \rVert^s_{v(s)} 
  \lesssim  
 \sum_{n\in I_j}    |(B_{n,r} -B_{r_j,r}) a |^s \\
& \lesssim   
 r_j^{\delta} \Big(\sum_{n=r_j+1}^{r_{j+1}}    n^{\beta-1} |\tau_{\mu}^{n} (1-\tau_{\mu})^r a|  + 
\sum_{n=r_j}^{r_{j+1}-1}   n^{\beta}  |\tau_{\mu}^{n}  (1-\tau_{\mu})^{r+1}  a|  \Big)^s \\
& \lesssim   r_j^{\delta(1+s)}  \sum_{n=r_j}^{r_{j+1}}  n^{(\beta-1)s}  |\tau_{\mu}^{n}  (1-\tau_{\mu})^r  a|^s   
+   r_j^{\delta(1+s)} \sum_{n=r_j}^{r_{j+1}-1}   n^{\beta s}  |\tau_{\mu}^{n}  (1-\tau_{\mu})^{r+1}  a|^s   .
  \end{align*}
Thus
\begin{align} \label{eq:B1}
 \lVert \{ (B_{n,r} -B_{r_j,r}) a\} \rVert_{v(s)}
  \lesssim &  {\bf Q_{(\beta-1)s+\delta(1+s),s,r}}a+ {\bf Q_{\beta s+\delta(1+s),s,r+1}}a.
 \end{align}
Since $\beta\le r$ and $\delta(1+s)+1<s$, these are bounded on $L^1$.

When $r>0$
\begin{align*} 
\lVert \{K_n  &   a\} \rVert_{v(s)} \lesssim    \sum_j  |  (r_j^{\beta} \tau_{\mu}^{r_j}-  r_{j+1}^{\beta} \tau_{\mu}^{r_{j+1}}) (1-\tau_{\mu})^r a| \\
 \lesssim  & \sum_j    r_j^{\beta} |(\tau_{\mu}^{r_j}-  \tau_{\mu}^{r_{j+1}}) (1-\tau_{\mu})^r a|
 + \sum_j   ( r_{j+1}^{\beta} -r_j^{\beta}) |\tau_{\mu}^{r_{j+1}} (1-\tau_{\mu})^r a|  \\
  \lesssim  & \sum_j    r_j^{\beta} |(\tau_{\mu}^{r_j}-  \tau_{\mu}^{r_{j+1}}) (1-\tau_{\mu})^r a|
 + \sum_j     r_{j+1}^{\beta-1+\delta}  |\tau_{\mu}^{r_{j+1}} (1-\tau_{\mu})^r a|.  \\
 \end{align*}
Since $\beta \le r$,  Lemma \ref{lem:rn_weak} part (a) applies to the first term with $s=1$, and 
Lemma \ref{lem:rn_weak} part (b) applies to the second term.

When $r=0$ and $\beta=0$, we have
\[ \lVert \{K_n    a\} \rVert_{v(s)} \lesssim    \sum_j  |  ( \tau_{\mu}^{r_j}-  \tau_{\mu}^{r_{j+1}}) a|. \]
Thus Lemma \ref{lem:rn_weak} part (a) applies. 

In either case, 
we obtain
\begin{align} \label{eq:B2} \int_{(E^*)^c}  \lVert \{K_n    a(x)\} \rVert_{v(s)} \, dm 
\lesssim  \ \lambda  m(E),
\end{align}

Finally, from equations (\ref{eq:def}), (\ref{eq:B1}) and (\ref{eq:B2})
\begin{align*}
\int_{(E^*)^c} \lVert \{ B_{n,r} a\} \rVert_{v(s)}  dm 
\le &
\int_{(E^*)^c}  \left[ \lVert \{ (B_{n,r} -K_n)  a\} \rVert_{v(s)}
  + 
\lVert \{K_n   a(x)\} \rVert_{v(s)}\right] \, dm\\
 \lesssim & \  \lambda m(E).
  \end{align*}

The result for the oscillation norm follows from noticing that $ \lVert B_{n,r}a \rVert_{o(s)}\le   \lVert B_{n,r}a \rVert_{v(s)}$.
\hfill$\square$

\noindent {\it Proof  of Theorem \ref{thm:main2} item d):}\\
By equation (\ref{eqn:summaxs}), if $s\ge 2$
\[\left(\sum_k \max_{n_k<n\le n_{k+1}} n^{r s} |\tau_{\mu}^n(1-\tau_{\mu})^r f |^s \right)^{1/s} \lesssim
 {\bf Q_{2r-1,2,r}}f + {\bf Q_{2(r+1)-1,1,r+1}}f.\]
 Thus the result follows from Theorem \ref{thm:main2} item a) with $s=2$.
\hfill$\square$
 %%%

\begin{lemma}\label{lem:rn_weak} 
Let $(X,\beta,m)$ be a probability space, $\tau:X\to X$ an ergodic, invertible measure preserving transformation. 
Let $\mu$ be a probability measure on $\mathbb Z$ with property $\text{BA}_1$, and  $s\ge  1$.
Let $\{r_{n+1}\} $ be a sequence with $r_{n+1}-r_n\sim r_n^{\delta}$ for some $0<\delta \le 1$. 
Let $\lambda>0$ and $a$ be a $\lambda$--atom, with $E$ and $E^*$ as in definition \ref{def:atom}.
\begin{enumerate}
\item If $\beta\le r+(1-\delta )(1-1/s)$,
then 
\[ \int_{(E^*)^c} \sum_n r_n^{\beta s}|(\tau_{\mu}^{r_n}-\tau_{\mu}^{r_{n+1}})(I-\tau_{\mu})^r a|^s)^{1/s}\, dm
\lesssim \lambda m(E).
\]
\item If $r>0$ and $\beta\le r-(1-\delta )/s$, 
\[ \int_{(E^*)^c} \sum_n r_n^{\beta s} |\tau_{\mu}^{r_n}(I-\tau_{\mu})^r a|^s)^{1/s}\, dm
\lesssim \lambda m(E).\]
\end{enumerate}
\end{lemma}

\begin{proof}
For simplicity, we only prove case (a) because case (b) follows from similar estimates. 

By Theorem \ref{thm:ergodic_key}, it suffices to check the assumptions of that theorem for \\ $\hat \Delta_n=r_n^{\beta}({\hat\mu}^{r_n}-{\hat\mu}^{r_{n+1}})(1-\hat\mu)^r$ and the sequence $n_k=k$.

From equation (\ref{eq:nk}),  and since $\beta+\delta+(1-\delta )/s\le r+1$,
\begin{align*}
 \left(\sum_{n=1}^{\infty} |\hat\Delta_n(t)|^s\right)^{1/s}  
 \lesssim & \frac {h(t)^{r+1}}{h(t)^{\beta+\delta +(1-\delta )/s} } \lesssim 1.
 \end{align*}
 Hence 
 \[\int_{|t|<.5} \left(\sum_{n=1}^{\infty} |\hat\Delta_n(t)|^s\right)^{1/s}  dt \lesssim 1
 \mbox{ and }
  \sum_{|j|>0} \frac 1{|j|^2} \left(\sum_{n=1}^{\infty} |\hat\Delta_n(1/|j|)|^s\right)^{1/s} <\infty.\]
 By  equation (\ref{eq:nkprime}) 
 \[
 \left(\sum_{n=1}^{\infty} |\hat\Delta'_n(t)|^s\right)^{1/s}  \lesssim \frac{h(t)^r }{h(t)^{\beta+\delta+(1-\delta)/s}} |\hat\mu'(t)|,\]
 and by $\text{BA}_1$(ii), $|t \hat\mu'(t)| \lesssim h(t)$,
\begin{align*} \sum_{|j|>0} \frac 1{|j|^3} \Big(\sum_{n=1}^{\infty}   |\hat\Delta'_n (1 &  /|j|)|^s\Big)^{1/s} 
\lesssim  \sum_{|j|>0} \frac 1{|j|^3}\frac{h(1/|j|)^r}{h(1/|j|)^{\beta+\delta+(1-\delta)/s} } |\hat\mu'(1/|j|)|\\
\lesssim & \sum_{|j|>0} \frac 1{|j|^2}h(1/|j|)^{r+1-(\beta+\delta+(1-\delta)/s)}
\lesssim  \sum_{|j|>0} \frac 1{|j|^2}<\infty.
\end{align*}
 By equation (\ref{eq:nk2primev2}) and $\text{BA}_1$,
\[ \int_{|t|<.5} \left(\sum_{n=1}^{\infty} |\hat\Delta''_n(t)|^s\right)^{1/s}  |t|^2 \, dt 
 \lesssim \int_{|t|<.5}  \frac{h(t)^r}{h(t)^{\beta+\delta+(1-\delta)/s} }\frac{|\hat\mu'(t)|}{|t|} |t|^2 dt 
 \lesssim 1.\]
 %%%
\end{proof}

The authors are very thankful to the referee for careful reading of our manuscript and for the suggestions for improvement on 
Theorems \ref{thm:RittResult}, %1.7, 
\ref{thm:main2} %1.10
and Corollary \ref{cor:sup}. %2.6

%%%%%%%%%%%%%%%%%%%%%%%%%%%%%%%%%%%%%%%%%%

\end{document}